\newcounter{case}
\newcounter{subcase}[case]
\newtheorem{theorem}{Theorem}[section]
\newtheorem{conjecture}[theorem]{Conjecture}
\newtheorem{convention}[theorem]{Convention}
\theoremstyle{remark}
\newtheorem{notation}[theorem]{Notation}
\newtheorem{remark}[theorem]{Remark}
\DeclarePairedDelimiter\ceil{\lceil}{\rceil}
\DeclarePairedDelimiter\floor{\lfloor}{\rfloor}
\newtheorem{observation}[theorem]{Observation}
\renewcommand{\geq}{\geqslant}
\renewcommand{\leq}{\leqslant}
\renewcommand{\ge}{\geqslant}
\renewcommand{\le}{\leqslant}
\def\cref#1{Corollary~$\ref{#1}$}
\newcommand{\cc}{\mathcal{C}}
\newcommand{\ci}{\mathcal{I}}
\newcommand{\cm}{\mathcal M}
\title{On a conjecture of Stein}
\author{
Ron Aharoni\\
\small Department of Mathematics, Technion, Haifa 32000, Israel\\
\and Eli Berger \\ \small Department of Mathematics, Haifa University, Haifa 31999, Israel\\
\and Dani Kotlar\\
\small Department of Computer Science, Tel-Hai College, Upper Galilee, Israel\\
\and Ran Ziv\\
\small Department of Computer Science, Tel-Hai College, Upper Galilee, Israel\\
}
\date{}
\begin{document}

\maketitle

\maketitle

\begin{abstract}
Stein \cite{stein} proposed the following conjecture: if the edge set of $K_{n,n}$ is partitioned into $n$ sets, each of size $n$, then there is a partial rainbow matching of size $n-1$. He proved that there is a partial rainbow matching of size $n(1-\frac{D_n}{n!})$, where $D_n$ is the number of derangements of $[n]$. This means that there is a partial rainbow matching of size about $(1- \frac{1}{e})n$.  Using a topological version of Hall's theorem we improve this bound to $\frac{2}{3}n$.

 \end{abstract}

\section{Introduction}

A {\em Latin square} of order $n$ is an $n \times n$ array, in which each row and each column is  a permutation of $\{1,\ldots, n\}$.
A {\em partial transversal} in an $n \times n$ array of symbols is a set of  entries, each in a distinct row and distinct column, and having distinct symbols. If the partial transversal is
of size $n$, then it is called a {\em full transversal}, or simply a {\em transversal}.
In $1967$ Ryser \cite{ryser} published a conjecture that has  since gained some renown:

\begin{conjecture}\label{ryser}
An odd Latin square has a transversal.
\end{conjecture}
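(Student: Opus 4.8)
\medskip
\noindent\textbf{Proof proposal.} \cjref{ryser} is one of the best known open problems in combinatorics, so what follows is a plan of attack rather than a proof; the tools mentioned above are exactly what one would hope to push in its direction. The convenient reformulation is via proper edge colourings: a Latin square of order $n$ is the same thing as a proper edge colouring of $K_{n,n}$ by colour classes $C_1,\dots,C_n$, each a perfect matching (the ``symbols''), and a transversal is a \emph{rainbow} perfect matching, i.e.\ one using exactly one edge of each $C_i$. In this language \cjref{ryser} asks for a rainbow perfect matching whenever $n$ is odd, Stein's conjecture is the relaxation that allows one colour to be dropped for all $n$, and the $\frac{2}{3}n$ bound of this paper is the current best \emph{unconditional} lower bound on a partial rainbow matching.

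The plan is to run the topological Hall machinery used below for the $\frac{2}{3}n$ estimate, but to arrange that it outputs a matching meeting \emph{all} $n$ colour classes. One sets up the bipartite ``colour--edge'' incidence with one side $\{C_1,\dots,C_n\}$, where $C_i$ is joined to the edges it contains, and tries to verify the topological Hall hypothesis: for every $S\subseteq\{C_1,\dots,C_n\}$ the independence complex of the graph spanned by $\bigcup_{i\in S}C_i$ has connectivity at least $|S|-1$. If this holds, the theorem yields a system of distinct representatives, that is, a full rainbow matching. The concrete steps are: (i) reduce the verification to sets $S$ with $|S|$ close to $n$, handling small $S$ by direct Hall-type arguments; (ii) bound from below the connectivity of the independence complex of a union of few perfect matchings of $K_{n,n}$; (iii) use the parity hypothesis to settle the last, tightest cases.

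Step (iii) is where I expect the genuine difficulty, and it is where the history of the problem lives. For Cayley tables of groups the conjecture is the Hall--Paige theorem (odd order forces a trivial Sylow $2$-subgroup, hence a complete mapping, hence a transversal), and the failure in the even case --- witnessed already by the addition table of $\mathbb{Z}_{2k}$ --- is a true sign/parity phenomenon. For an arbitrary Latin square there is no such algebra to exploit, and the connectivity estimate from step (ii) loses an additive term of precisely the order one cannot afford once $|S|$ is within $o(n)$ of $n$; this is exactly why the same method, applied honestly, halts at a constant fraction of $n$ rather than at $n-1$ or $n$. A complementary route is an absorption strategy: assemble almost all of a rainbow matching by a greedy or random process, reserve a small random absorbing structure in advance, and show that the handful of remaining colours can be rotated in --- here the oddness must be invoked at precisely the absorption stage, since that is the only place a parity obstruction can survive, and making that step robust rather than ad hoc is the crux. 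I would pursue both lines in tandem: the topological bound to control the bulk of the colours, an absorber to finish off the final $o(n)$ of them, with the parity hypothesis reserved for the endgame.
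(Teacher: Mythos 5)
There is no proof here, and none was to be expected: \cjref{ryser} is stated in the paper as an open conjecture, and the paper itself proves nothing about it beyond the partial result of Theorem \ref{stein23}. Your text is, as you say yourself, a plan of attack, and its two key steps are exactly the ones that are missing. Worse, the specific route (i)--(iii) through the topological Hall theorem is provably blocked, not merely unverified: to get a full (or size $n-1$) rainbow matching from \tref{ah} (or its deficiency version, Theorem \ref{deficiency}) you would need $\eta_H$ of the matching complex spanned by the union of $|S|$ colour classes to be at least about $|S|$ for \emph{every} $S$, in particular for $S$ the set of all $n$ colours, where the complex is all of $\cm(K_{n,n})$. But by the result of Shareshian and Wachs cited in the paper \cite{sw}, $\eta_H(\cm(K_{n,n}))$ equals $\floor{2n/3}$ exactly, so the hypothesis fails badly for $|S|$ near $n$, independently of any parity considerations. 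The topological method, applied honestly, cannot be ``arranged to output'' a matching meeting all (or all but one of the) colour classes; it inherently stalls at a constant fraction of $n$, which is precisely why this paper claims only $\frac{2}{3}n-\frac12$.

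Step (iii), where you concede the genuine difficulty lies, is not a step but a placeholder: neither the Hall--Paige analogy (which applies only to Cayley tables) nor the absorption sketch identifies any mechanism by which oddness of $n$ enters for a general Latin square, and that is the entire content of the conjecture. So the proposal contains no argument to compare with the paper's (nonexistent) proof; the honest summary is that both the conjecture and your endgame remain open.
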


For even $n$ the Latin square defined by $L(i,j)=i+j \pmod n$ does not have a transversal. But a natural conjecture is:

\begin{conjecture}\label{sb}
An  $n\times n$ Latin square has a partial transversal of size $n-1$.
\end{conjecture}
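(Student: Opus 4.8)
The plan is to recast the statement in the language of rainbow matchings and then attack it by a maximal-matching augmentation argument reinforced by the topological version of Hall's theorem mentioned in the abstract. View the $n \times n$ Latin square $L$ as a proper edge-colouring of $K_{n,n}$: the vertices on one side are the rows, those on the other the columns, and the edge joining row $i$ to column $j$ receives colour $L(i,j)$. Because $L$ is a Latin square, each colour class $F_s = \{(i,j) : L(i,j)=s\}$ is a perfect matching of $K_{n,n}$, so we obtain $n$ pairwise disjoint perfect matchings $F_1,\dots,F_n$. A partial transversal of $L$ of size $k$ is exactly a \emph{rainbow matching} of size $k$, that is, a matching of $K_{n,n}$ using $k$ edges of $k$ distinct colours. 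Thus Conjecture~\ref{sb} is equivalent to the assertion that these $n$ disjoint perfect matchings admit a rainbow matching of size $n-1$.

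First I would fix a rainbow matching $M$ of maximum size $t$ and assume for contradiction that $t \le n-2$, so that at least two rows, two columns and two colours are unused. To organise the search for an augmenting structure that would enlarge $M$ and contradict maximality, I would apply the topological Hall theorem not to all $n$ colour classes but to a judiciously chosen set $I$ of $n-1$ of them: a rainbow matching meeting every class in $I$ is precisely an independent system of representatives for $\{F_s\}_{s \in I}$ in the line graph of $K_{n,n}$, whose independence complex is the matching complex $\mathcal{M}(K_{n,n})$. By the topological Hall theorem it then suffices to verify, for every $J \subseteq I$, that the connectivity of $\mathcal{M}\bigl(\bigcup_{s\in J} F_s\bigr)$ is at least $|J|$, which would immediately produce a full system of representatives of size $n-1$.

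The key step is therefore a connectivity estimate for matching complexes of unions of disjoint perfect matchings. For an arbitrary graph $G$ the only general bound available gives connectivity of order $\nu(G)/2$, and since $\bigcup_{s\in J}F_s$ has matching number $n$ this yields only the familiar $\tfrac{2}{3}n$-type conclusion. The crux, and the step I expect to be the main obstacle, is to improve this to a bound of the order of the matching number itself for the highly structured graphs at hand: a union of $|J|$ disjoint perfect matchings is a $|J|$-regular bipartite graph, and one would hope that this regularity forces $\mathcal{M}$ to be far more highly connected than $\nu/2$. Establishing connectivity $\ge |J|$ for such regular bipartite graphs (equivalently, controlling the homology of the matching complex of a $|J|$-regular bipartite graph on $2n$ vertices) is exactly the gap separating the $\tfrac{2}{3}n$ bound from the conjectured $n-1$, and is where the real difficulty lies. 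A successful argument would presumably have to exploit the proper-colouring structure that distinguishes Latin squares from Stein's more general partitions, perhaps by combining the topological estimate with a direct alternating-path augmentation that repairs the deficiency precisely when the connectivity bound is tight.
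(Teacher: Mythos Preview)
The statement you are attempting to prove is Conjecture~\ref{sb}, which is \emph{open}; the paper does not prove it and indeed records $n-11\log_2^2 n$ (Shor--Hatami) as the best known bound. So there is no ``paper's own proof'' to compare against, and any purported proof must be scrutinised on its own merits.

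Your outline has a concrete and fatal gap, which you yourself flag: the connectivity estimate $\eta_H\bigl(\cm(\bigcup_{s\in J}F_s)\bigr)\ge |J|$ is not merely unproven, it is \emph{false}. Take $J$ to be all $n$ colour classes (or, in your setup, $J=I$ with $|I|=n-1$, whose union is $K_{n,n}$ minus a perfect matching). The union is then essentially $K_{n,n}$, and Theorem~\ref{blz} together with the Shareshian--Wachs equality cited immediately after it give
\[
\eta_H\bigl(\cm(K_{n,n})\bigr)=\Bigl\lfloor \tfrac{2n}{3}\Bigr\rfloor,
\]
which is far below the $n$ or $n-1$ you require. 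Thus the topological Hall route, applied as you describe, is provably capped near $\tfrac{2}{3}n$; this is precisely the bound the paper attains (in the more general Stein setting) via Theorem~\ref{eta}. Regularity of the colour classes does not rescue the argument, because the obstruction already appears for the maximally regular case $K_{n,n}$ itself. The vague suggestion at the end---combining the topological bound with an alternating-path augmentation---is not a proof step but a restatement of the problem: the augmenting-path machinery is exactly what underlies the Woolbright and Shor--Hatami bounds, and marrying it to connectivity estimates in a way that reaches $n-1$ is the whole difficulty.
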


This was conjectured independently by Stein \cite{stein} and by Brualdi \cite{brualdiryser}. An even more general conjecture was made by Stein:

\begin{conjecture}\label{steingeneral}
Let $A$ be an $n \times n$ array of symbols, each symbol appearing in precisely $n$ entries. Then $A$ has a partial transversal of size $n-1$.
\end{conjecture}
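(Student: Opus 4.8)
The plan is to recast the array $A$ as an edge-colouring of $K_{n,n}$: let the two sides be the rows and the columns, identify the cell $(i,j)$ with the edge joining row $i$ to column $j$, and colour that edge by the symbol occupying $(i,j)$. Since each symbol occurs in exactly $n$ cells, this partitions $E(K_{n,n})$ into $n$ colour classes $F_1,\dots,F_n$, each of size $n$, and a partial transversal of size $n-1$ is precisely a \emph{partial rainbow matching} of size $n-1$, i.e.\ a matching of $K_{n,n}$ meeting $n-1$ of the classes in a single edge each. I would attack this through the topological version of Hall's theorem, viewing the desired object as a partial system of disjoint representatives for the family $(F_1,\dots,F_n)$ inside the matching complex $M=M(K_{n,n})$, whose faces are exactly the matchings of $K_{n,n}$.

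The engine is the deficiency form of topological Hall: if for every index set $J\subseteq[n]$ one has $\eta\bigl(M[\bigcup_{i\in J}F_i]\bigr)\ge |J|-d$, where $\eta$ denotes topological connectivity (one more than the connectivity of the geometric realization) and $M[G]$ is the subcomplex of matchings using only the edges of $G$, then the family admits a partial system of disjoint representatives meeting all but at most $d$ of the classes. Taking $d=1$ reduces the entire conjecture to one connectivity estimate: writing $G_J:=\bigcup_{i\in J}F_i$, it suffices to prove that for every colour set $J$,
\[
\eta\bigl(M(G_J)\bigr)\ \ge\ |J|-1 .
\]
The combinatorial half of this is automatic. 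Because the classes partition the edges, $G_J$ has exactly $|J|\,n$ edges, and since every vertex of $K_{n,n}$ is incident with at most $n$ edges, any vertex cover of $G_J$ has size at least $|J|$; by K\"onig's theorem $\nu(G_J)\ge |J|$. So a large matching is always present in $G_J$, and the whole question is whether the matching complex is correspondingly highly connected. Observe that $d=1$ (rather than $d=0$) is exactly the right target, matching the fact that the full size $n$ can genuinely fail.

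Here lies the main obstacle, and it is the true heart of the problem. The general lower bound linking the connectivity of a matching complex to its matching number is far from $\eta(M(G))\ge\nu(G)$; the estimates available for such unions yield only about $\tfrac23\nu(G_J)$, which is precisely what produces the $\tfrac23 n$ bound of the abstract, while short odd obstructions show the generic bound cannot be pushed much higher. To reach the required $|J|-1$ one must abandon the black-box estimate and exploit the special geometry of $G_J$: that it is a union of exactly $|J|$ equal-size blocks sitting inside a single complete bipartite graph, so the worst odd obstructions cannot all be realised at once. The route I would pursue is to interleave the topological bound with a combinatorial deficiency-reduction induction on $|J|$: given a rainbow matching missing two or more colours, use the surplus guaranteed by $\nu(G_J)\ge|J|$ together with a discrete-Morse or nerve-lemma analysis of $M(G_J)$ to free an edge of a missing colour, collapsing the obstruction one colour at a time until a single class remains unmet. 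Closing the gap between the generic connectivity bound and the sharp $\nu-1$ for this structured family is the step that remains open, and essentially all the difficulty of Stein's conjecture is concentrated there.
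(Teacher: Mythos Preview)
Your proposal is not a proof, and you essentially say so yourself in the last paragraph. The statement is a \emph{conjecture}; the paper does not prove it either. What the paper establishes is precisely the weaker $\tfrac{2}{3}n$ bound that you mention as the output of the ``black-box'' connectivity estimate.

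Your framework is exactly the one the paper adopts: recode the array as a partition $F_1,\dots,F_n$ of $E(K_{n,n})$ into sets of size $n$, apply the deficiency version of topological Hall (Theorem~\ref{deficiency}), and reduce everything to lower-bounding $\eta_H(\cm(G_J))$ for $G_J=\bigcup_{i\in J}F_i$. The paper's Theorem~\ref{eta} gives $\eta_H(\cm(F))\ge |F|/n - n/3 - 1/2$ for any $F\subseteq E(K_{n,n})$, which with $|F|=|J|\,n$ yields $\eta_H(\cm(G_J))\ge |J|-n/3-1/2$, hence a partial rainbow matching of size $\tfrac{2}{3}n-\tfrac{1}{2}$. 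This is obtained via a careful Meshulam-game analysis on $L(G_J)$, not a generic $\nu/2$ bound; it matches the known sharp value $\eta_H(\cm(K_{n,n}))=\lfloor 2n/3\rfloor$ at $J=[n]$.

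The genuine gap in your proposal is the passage from $\eta\approx\tfrac{2}{3}|J|$ to $\eta\ge|J|-1$. Your suggested route---``interleave the topological bound with a combinatorial deficiency-reduction induction'' and ``a discrete-Morse or nerve-lemma analysis''---is not an argument but a wish. In fact the connectivity of $\cm(K_{n,n})$ itself is only $\lfloor 2n/3\rfloor$ (see \cite{sw}), so for $J=[n]$ the inequality $\eta(\cm(G_J))\ge |J|-1=n-1$ is \emph{false}; the topological Hall route with $d=1$ cannot work as stated, because the hypothesis fails already for the full set of colours. Any successful attack must therefore go beyond a uniform connectivity lower bound on $\cm(G_J)$---for instance by exploiting that failures of the Hall-type condition for large $J$ are compensated elsewhere, or by abandoning the topological approach altogether. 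Your proposal does not supply such an idea.
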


In this conjecture there is no distinction between the cases $n$ odd and $n$ even. For every   $n>1$  there exist arrays as in the conjecture, not having a full transversal (take $A(i,j)=i$ for $j <n$, $A(i,n)=i+1 \pmod n$).
Partial results on all three conjectures, as well as on related conjectures, abound. Koksma \cite{koksma} proved   that in an $n \times n$ Latin square there is a partial transversal of size at least $\frac{2}{3}n$. Woolbright \cite{woolbright}  improved this bound to  $n-\sqrt{n}$, and Shor and Hatami (\cite{shor} with a correction in \cite{shorhatami}) proved the best bound known so far,  namely, $n - 11\log_2^2 n$.

As to Conjecture \ref{steingeneral}, Stein himself proved that on average a permutation submatrix contains at least $n(1-\frac{D_n}{n!})$ distinct symbols, where $D_n$ is the number of derangements of $[n]$. Since $D_n$ is asymptotically $\frac{n!}{e}$,  this yields a bound of about $(1-\frac{1}{e})n$. In this paper we improve the bound to $\frac{2}{3}n$.

\section{The special ISR properties of line graphs}

Let $V_1, V_2, \ldots ,V_m$ be a partition of the vertex set $V$ of a graph $G$. A {\em  partial ISR} (Independent Set of Representatives) for this system is an independent set in $G$, meeting each  $V_i$ in at most one vertex. The word ``partial'' is omitted if it is of size $m$. So, an ISR is a full choice function of the $V_i$'s, whose image is independent in $G$. Other terms used in the literature are ``independent transversal'' and ``rainbow independent set''. When the graph $G$ is a line graph of a graph $H$ then an ISR is called a ``rainbow matching'' (since it is a matching in $H$).\\

Broadly speaking, the sparser a graph is the more likely it is to have  ISRs. This explains the logic behind the following theorem of Haxell \cite{pennyfirst}:

\begin{theorem}\label{basic}
If $|V_i| \ge 2\Delta(G)$ for all $i=1,\ldots,m$, then there exists an ISR.
\end{theorem}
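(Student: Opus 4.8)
The plan is to derive \tref{basic} from the topological version of Hall's theorem for independent systems of representatives (Aharoni--Haxell, Meshulam): if $V_1,\dots,V_m$ are pairwise disjoint vertex sets of a graph $G$ and, for every $I\subseteq\{1,\dots,m\}$, the independence complex $\ci\bigl(G[\bigcup_{i\in I}V_i]\bigr)$ is topologically $(|I|-2)$-connected, then $G$ has an ISR. (Here $\ci(H)$ denotes the complex whose faces are the independent sets of $H$.) It is convenient to write, for a simplicial complex $\cx$, $\eta(\cx)$ for its topological connectivity plus $2$, so that ``$\cx$ is $(k-2)$-connected'' becomes ``$\eta(\cx)\ge k$''. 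With this notation what must be checked is the single inequality
\[
  \eta\bigl(\ci(G[W_I])\bigr)\ \ge\ |I|,\qquad W_I:=\bigcup_{i\in I}V_i,\qquad\text{for every }I\subseteq\{1,\dots,m\}.
\]
So the whole problem reduces to bounding the connectivity of an independence complex from below in terms of the number of vertices and the maximum degree.

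\textbf{The key lemma.}
I would prove: for every finite graph $H$ with at least one edge,
\[
  \eta\bigl(\ci(H)\bigr)\ \ge\ \frac{|V(H)|}{2\Delta(H)}
\]
(with the rounding arranged so that the induction below closes; even the cruder bound $(|V(H)|-1)/(2\Delta(H))$ is enough after taking ceilings). The engine is Meshulam's Mayer--Vietoris recursion: for any vertex $v$ of $H$,
\[
  \eta\bigl(\ci(H)\bigr)\ \ge\ \min\bigl\{\,\eta(\ci(H-v)),\ \eta\bigl(\ci(H-N[v])\bigr)+1\,\bigr\}.
\]
This follows by writing $\ci(H)=\ci(H-v)\cup\overline{\operatorname{st}}(v)$, observing that the closed star $\overline{\operatorname{st}}(v)=\{v\}*\operatorname{lk}(v)$ is a cone and hence contractible, that $\ci(H-v)\cap\overline{\operatorname{st}}(v)=\operatorname{lk}(v)=\ci(H-N[v])$, and then applying the standard ``connectivity of a union'' lemma $\eta(\ca\cup\cb)\ge\min\{\eta(\ca),\eta(\cb),\eta(\ca\cap\cb)+1\}$ with $\eta(\cb)=\infty$. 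One then inducts on $|V(H)|$: if $v$ is isolated, $\ci(H)$ is a cone and the bound is vacuous; otherwise $H-v$ has one fewer vertex and $H-N[v]$ has at least $|V(H)|-1-\Delta(H)$ vertices, and substituting the inductive estimates into the recursion yields the claim, the $+1$ from the ``$H-N[v]$'' branch exactly compensating the deletion of $N[v]$ (taking $v$ of maximum degree makes the arithmetic go through cleanly).

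\textbf{Assembling the proof.}
Under the hypothesis $|V_i|\ge 2\Delta(G)$ of \tref{basic}, fix $I$ and set $W=W_I$. Since the $V_i$ are disjoint, $|W|\ge 2\Delta(G)\,|I|\ge 2\Delta(G[W])\,|I|$. If $G[W]$ has no edge then $\ci(G[W])$ is a simplex, hence contractible, and $\eta=\infty\ge|I|$; otherwise the key lemma gives $\eta(\ci(G[W]))\ge|W|/(2\Delta(G[W]))\ge|I|$, an integer bound. Thus the topological Hall condition holds for every $I$, so $G$ has an ISR. (The degenerate case $\Delta(G)=0$ is trivial on its own.)

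\textbf{Where the difficulty lies.}
The combinatorics here is light; the weight is topological. The two points needing genuine care are (i) invoking the topological Hall theorem in the correct form, and (ii) the ``connectivity of a union'' lemma underpinning Meshulam's recursion --- a nerve/homotopy-pushout argument whose clean special case is ``a contractible piece glued along a subcomplex'' --- together with the bookkeeping of connectivity conventions at the degenerate complexes (the void complex and $\{\varnothing\}$) and the rounding in the induction. As a fallback I would keep in reserve Haxell's original, purely combinatorial argument: take a partial ISR of maximum size; if it misses some class $V_j$, every vertex of $V_j$ has a neighbour in the chosen set, and from this one grows an alternating structure of re-representations whose enlargement is always possible precisely because $|V_i|\ge 2\Delta(G)$ leaves enough room --- contradicting maximality. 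This avoids topology entirely at the cost of a more intricate inductive setup.
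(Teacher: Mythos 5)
Your overall route -- reduce \tref{basic} to the topological Hall theorem (\tref{ah}) and then prove a lower bound of roughly $|V(H)|/(2\Delta(H))$ on $\eta_H(\ci(H))$ -- is a sound and standard way to obtain Haxell's theorem, and it is consistent with this paper's machinery (the paper itself does not prove \tref{basic}; it cites Haxell). The reduction step and the final arithmetic ($|W_I|\ge 2\Delta(G)|I|$, edgeless case contractible) are fine, and the Mayer--Vietoris recursion $\eta_H(\ci(H))\ge\min\{\eta_H(\ci(H-v)),\,\eta_H(\ci(H-N[v]))+1\}$ is correctly justified.

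The gap is in the induction for your key lemma: it does not close. The explosion branch indeed gives $(n-1-\Delta)/(2\Delta)+1\ge n/(2\Delta)$, but the deletion branch gives only $\eta_H(\ci(H-v))\ge (n-1)/(2\Delta)$, which is strictly less than $n/(2\Delta)$, and "taking $v$ of maximum degree" does nothing for this branch. No rounding rescues it: any target bound $f(n)$ that is (eventually) increasing in $n$ would need $f(n-1)\ge f(n)$ to survive the deletion branch, and the same defect recurs for your cruder bound $(n-1)/(2\Delta)$ one step later (e.g.\ when $n\equiv 1 \pmod{2\Delta}$ the ceiling genuinely drops). In effect, the adversary may "delete" at every step of your one-step induction, and nothing in your argument forces this to ever stop short of destroying the bound. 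The standard fix is a multi-step round rather than a single application of the recursion: fix a vertex $u$ of positive degree and apply the recursion successively to its neighbours (equivalently, in the game of \tref{etaPsi}, CON offers all edges at $u$, exactly the device this paper uses in its main proof). If all neighbours of $u$ are deleted, $u$ becomes isolated and $\ci$ is a cone, so $\eta_H=\infty$; otherwise an explosion occurs after at most $\Delta-1$ deletions, so each explosion (each $+1$ to connectivity) accounts for at most $(\Delta-1)+(\Delta+1)=2\Delta$ removed vertices -- equivalently, exploding an edge $xy$ removes at most $\deg x+\deg y\le 2\Delta$ vertices. This yields $\eta_H(\ci(H))\ge\ceil*{\,|V(H)|/(2\Delta(H))\,}$, after which your assembly of the proof goes through unchanged.
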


It appears that line graphs behave particularly well with respect to ISRs. For example, the following can be proved from the results of \cite{ah}:

\begin{theorem}\label{deltah}
If $G=L(H)$, for some graph $H$, and $|V_i| \ge 2\Delta(H)$ for all $i=1,\ldots,m$, then there exists a rainbow matching.
\end{theorem}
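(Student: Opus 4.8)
The plan is to reduce \tref{deltah} to the topological version of Hall's theorem of Aharoni and Haxell, exactly as \tref{basic} is proved, but exploiting the extra structure of line graphs. Recall that the vertices of $G=L(H)$ are the edges of $H$, two of them being adjacent in $G$ precisely when they share an endpoint in $H$; thus the partition $V_1,\dots,V_m$ is an edge colouring of $H$, and a rainbow matching is a matching of $H$ using one edge of each colour. The topological Hall theorem states that if $V_1,\dots,V_m$ partitions $V(G)$ and $\eta\bigl(\mathcal{I}(G[\bigcup_{i\in I}V_i])\bigr)\ge|I|$ for every $\emptyset\ne I\subseteq[m]$, then $G$ has an ISR. (Here $\mathcal{I}(\cdot)$ is the independence complex and $\eta$ its topological connectivity, normalised so that $\eta\ge k$ means $(k-2)$-connected.) So it suffices to verify this connectivity condition.

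First I would observe that for a set $F\subseteq E(H)$ of vertices of $G=L(H)$, the induced subgraph $G[F]$ is again a line graph, namely $G[F]=L(H_F)$ where $H_F=(V(H),F)$; consequently $\mathcal{I}(G[F])=\mathcal{M}(H_F)$, the matching complex of $H_F$, and $\Delta(H_F)\le\Delta(H)$. Now fix $\emptyset\ne I\subseteq[m]$ and put $F=\bigcup_{i\in I}V_i$. Since the $V_i$ partition $E(H)$, we have $|F|=\sum_{i\in I}|V_i|\ge 2\Delta(H)\,|I|$.

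The key step is the input taken from \cite{ah}: for every graph $K$ the matching complex satisfies a connectivity estimate of the form $\eta(\mathcal{M}(K))\ge\bigl\lceil |E(K)|/(2\Delta(K))\bigr\rceil$ --- the same shape as the general bound $\eta(\mathcal{I}(\Gamma))\ge\lceil|V(\Gamma)|/(2\Delta(\Gamma))\rceil$ that underlies \tref{basic}, but with the much smaller $\Delta(H)$ in place of $\Delta(L(H))$ (which can be as large as $2\Delta(H)-2$). Granting this, and using $\Delta(H_F)\le\Delta(H)$,
$$\eta\bigl(\mathcal{I}(G[F])\bigr)=\eta(\mathcal{M}(H_F))\ \ge\ \left\lceil\frac{|F|}{2\Delta(H_F)}\right\rceil\ \ge\ \left\lceil\frac{2\Delta(H)\,|I|}{2\Delta(H)}\right\rceil\ =\ |I|,$$
which is exactly the hypothesis of the topological Hall theorem; applying it produces an ISR of $G$, i.e.\ the desired rainbow matching.

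The main obstacle is precisely this connectivity estimate for matching complexes with $\Delta(H)$ rather than $\Delta(L(H))$ in the denominator --- this is the ``special property'' of line graphs alluded to in the section title, and it is what has to be extracted from \cite{ah}; it may be phrased there in terms of matchings or fractional matchings of $H_F$ rather than its edge count and maximum degree, in which case one first notes that $|E(K)|/\Delta(K)$ is a lower bound for the fractional matching number of $K$ (put weight $1/\Delta(K)$ on every edge). A secondary, purely bookkeeping point is to ensure the normalisation of $\eta$ and the ceilings line up so the arithmetic lands on $\ge|I|$ with no off-by-one error; the factor $2$ in the hypothesis $|V_i|\ge 2\Delta(H)$ is exactly what makes this succeed. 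One could in principle bypass the topology and imitate Haxell's inductive augmenting-path proof of \tref{basic} directly for line graphs, but the route above is shorter and isolates the single new ingredient.
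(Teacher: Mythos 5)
Your reduction is set up correctly: for $F=\bigcup_{i\in I}V_i$ the induced subgraph of $G=L(H)$ is $L(H_F)$, its independence complex is $\cm(H_F)$, $\Delta(H_F)\le\Delta(H)$, and $|F|\ge 2\Delta(H)|I|$, so by \tref{ah} everything hinges on the single estimate you assert, namely $\eta_H(\cm(K))\ge |E(K)|/(2\Delta(K))$. That estimate is exactly where the gap is: you do not prove it, and it is not one of the results of \cite{ah} that you can take off the shelf. What \cite{ah} supplies (as this paper records in \tref{igamma} and \eqref{nuover2}) is $\eta_H(\ci(G))\ge i\gamma(G)$, hence for matching complexes only $\eta_H(\cm(K))\ge \nu(K)/2\ge |E(K)|/(4\Delta(K)-2)$; plugging that into your scheme proves the statement with roughly $4\Delta(H)$ in place of $2\Delta(H)$, not the theorem. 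Your proposed repair via fractional matchings does not close this factor of two: $\nu^*(K)\ge |E(K)|/\Delta(K)$ is true, but the bound coming from \cite{ah} is a domination/integral-matching bound, and it genuinely fails to upgrade to $\nu^*$ or to $|E|/(2\Delta)$ pointwise. For instance, for $K=C_5$ one has $i\gamma(L(C_5))=1$ while $|E(K)|/(2\Delta(K))=\nu^*(K)/2=5/4$; the connectivity inequality you want still holds there (in fact $\eta_H(\cm(C_5))=2$), but the route through the tools of \cite{ah} that you sketch does not deliver it.

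Moreover, the estimate you assume, $\eta_H(\cm(K))\ge |E(K)|/(2\Delta(K))$ --- equivalently a bound of the form $\eta_H(\cm(K))\ge |E(K)|/\tilde{\Delta}(K)$ since $\tilde{\Delta}(K)\le 2\Delta(K)$ --- is essentially the content of the later work \cite{agn} underlying \tref{agntheorem}, which this paper presents (together with \tref{aabtheorem} of \cite{aab}) as an \emph{improvement and generalization} of \tref{deltah}. So as written, your argument packs the entire difficulty of the theorem into one unproven lemma and attributes it to the wrong source; citing it would invert the logical order of the results being surveyed. To make the proof complete you would have to actually establish that connectivity lower bound (for example by a Meshulam-game or Haxell-type augmenting argument tailored to line graphs), or else find a derivation of \tref{deltah} that really uses only the tools of \cite{ah}; the reduction step you wrote down, while correct, is the easy part.
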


In \cite{aab} this was improved to:

\begin{theorem}\label{aabtheorem}
If $V_1, V_2, \ldots ,V_m$ is a partition of the vertex set of the line graph $G=L(H)$ of any  graph $H$ and  $|V_i| \ge \Delta(G)+2$ for all $i=1,\ldots,m$, then the system has an ISR.
\end{theorem}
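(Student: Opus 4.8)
\medskip\noindent
The plan is to derive the statement from the topological version of Hall's theorem of Aharoni and Haxell \cite{ah}. Write $\ci(\Gamma)$ for the independence complex of a graph $\Gamma$, and for a simplicial complex $C$ let $\eta(C)$ denote the connectivity parameter normalised so that $\eta(C)\ge k$ if and only if $C$ is $(k-2)$-connected; thus $\eta(C)\ge 1$ iff $C\ne\emptyset$, and $\eta$ is additive on joins. Topological Hall states: if $W_1,\dots,W_m$ are vertex classes of a graph and $\eta\bigl(\ci(G[\bigcup_{i\in I}W_i])\bigr)\ge|I|$ for every $I\subseteq[m]$, then there is an ISR. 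So the entire task is to verify these connectivity inequalities for $G=L(H)$.

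The key point is the following connectivity estimate for line graphs, of independent interest: \emph{for every graph $F$, $\ \eta\bigl(\ci(L(F))\bigr)\ge|E(F)|/(\Delta(L(F))+2)$}. Granting it, fix $I\subseteq[m]$ and put $U=\bigcup_{i\in I}V_i$. Since $V_1,\dots,V_m$ partition $V(G)=E(H)$, the induced subgraph $G[U]$ is the line graph of the subgraph $F_U$ of $H$ with edge set $U$, and $|E(F_U)|=|U|$, $\Delta(L(F_U))\le\Delta(G)$. Therefore
\[
\eta\bigl(\ci(G[U])\bigr)\ =\ \eta\bigl(\ci(L(F_U))\bigr)\ \ge\ \frac{|U|}{\Delta(L(F_U))+2}\ \ge\ \frac{|U|}{\Delta(G)+2}\ \ge\ |I|,
\]
the last step because $|U|=\sum_{i\in I}|V_i|\ge|I|\,(\Delta(G)+2)$. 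This is precisely the hypothesis of topological Hall, so the theorem follows.

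To prove the connectivity estimate I would induct on $|E(F)|$; the cases $|E(F)|\le\Delta(L(F))+2$ only ask $\ci(L(F))\ne\emptyset$, i.e.\ $E(F)\ne\emptyset$. Throughout I use $\deg_{L(F)}(uv)=\deg_F(u)+\deg_F(v)-2$, so $|N_{L(F)}[uv]|\le\Delta(L(F))+1$, deleting the vertex $uv$ from $L(F)$ yields $L(F-uv)$, and deleting its closed neighbourhood yields $L(F-u-v)$. If $F$ is disconnected, $F=F_1\sqcup F_2$, then $\ci(L(F))=\ci(L(F_1))*\ci(L(F_2))$ and additivity of $\eta$ on joins reduces the bound to the components (whose line graphs have no larger maximum degree), so assume $F$ connected. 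The basic tool is the Mayer--Vietoris/nerve inequality $\eta(\ci(\Gamma))\ge\min\bigl(\eta(\ci(\Gamma-w)),\,\eta(\ci(\Gamma-N[w]))+1\bigr)$, obtained by covering $\ci(\Gamma)$ by the (contractible) closed star of a vertex $w$ and by $\ci(\Gamma-w)$, whose intersection is $\ci(\Gamma-N[w])$. Taking $\Gamma=L(F)$ and $w=uv$ an edge of $F$, the ``link'' side gives, by induction, $1+\eta(\ci(L(F-u-v)))\ge 1+\frac{|E(F)|-(\Delta(L(F))+1)}{\Delta(L(F))+2}=\frac{|E(F)|+1}{\Delta(L(F))+2}$, which exceeds the target for \emph{every} choice of $uv$.

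The difficulty — and the step I expect to be the main obstacle — is the other side, $\eta(\ci(L(F-uv)))$: as $|E(F-uv)|=|E(F)|-1$, the naive induction loses exactly one unit and one must recover it. This is where claw-freeness of line graphs is genuinely needed: the neighbourhood of the vertex $uv$ of $L(F)$ splits into the two cliques ``edges of $F$ at $u$'' and ``edges of $F$ at $v$''. The plan is to replace the single vertex deletion by a decomposition of $\ci(L(F))$ around a whole clique neighbourhood — the edges of $F$ incident to a suitably chosen vertex $w$ — and to apply the nerve lemma to the resulting cover by $\deg_F(w)+1$ subcomplexes: the stars are all contractible, and each of their higher intersections is the independence complex of $L$ of a graph obtained from $F$ by deleting $w$ together with several of its neighbours — graphs with many fewer edges, since deleting a vertex of $F$ destroys an entire clique of $L(F)$ — so the extra shifts in the nerve bound make up for the unit lost in the pure deletion. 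Selecting $w$ so that these residual subgraphs are small enough (for connected $F$ a suitable reduction can always be arranged, e.g.\ at an endpoint of a maximum-degree edge of $L(F)$, or at a minimum-degree vertex) and pushing through the resulting bookkeeping is the delicate part; the remainder is the degree arithmetic above plus routine nerve/homotopy-colimit estimates. An alternative, avoiding the case analysis, is to lower-bound $\eta(\ci(G))$ by a spectral parameter of $G$ valid for $K_{1,3}$-free graphs and compare it with $\Delta(G)$.
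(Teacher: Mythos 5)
Your reduction is fine as far as it goes: given the connectivity estimate $\eta\bigl(\ci(L(F))\bigr)\ge |E(F)|/(\Delta(L(F))+2)$ for every graph $F$, the passage to the theorem via the topological Hall theorem (Theorem~\ref{ah}), using that $G[V_I]$ is the line graph of the subgraph of $H$ with edge set $V_I$ and that $\Delta$ does not increase under taking subgraphs, is correct and routine. But that estimate \emph{is} the theorem, in the sense that it carries all of the difficulty: note that for a simple graph $H$ one has $\tilde{\Delta}(H)=\Delta(L(H))+2$, so your ``key point'' is exactly the connectivity bound underlying Theorems~\ref{aabtheorem} and \ref{agntheorem}, which the present paper does not prove but quotes from \cite{aab} (where it is obtained by spectral arguments for $K_{1,k}$-free graphs) and \cite{agn}. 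Your proposal does not prove it either. The induction you set up works on the explosion branch, but on the deletion branch it loses $\tfrac{1}{\Delta(L(F))+2}$ per step, and this is precisely where the naive Meshulam-type recursion only yields the weaker denominator $2\Delta(H)$ of Theorem~\ref{deltah}. The device you propose to recover the lost unit --- a nerve-lemma decomposition over the clique of edges at a chosen vertex $w$ of $F$, with connectivity shifts coming from the higher intersections --- is stated only as a plan: you do not specify how $w$ is chosen, do not verify the connectivity hypotheses of the nerve lemma for the multi-fold intersections $\ci(L(F-w-u_1-\cdots-u_k))$ (for which the inductive bound must be combined with the shift $k-1$ and shown to beat the target uniformly in $k$), and you concede that this is ``the main obstacle'' and ``the delicate part.'' As it stands the argument establishes the theorem only modulo an unproven lemma that is equivalent in strength to the cited results, so there is a genuine gap at the heart of the proof; your closing remark about a spectral lower bound for claw-free graphs gestures at the actual proof in \cite{aab}, but is likewise not developed.
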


Clearly, $\Delta(G)+2 \le 2\Delta(H)$. In \cite{agn} this was generalized to hypergraphs. For a hypergraph $H$ denote by $\tilde{\Delta}(H)$ the maximum, over all edges $e \in H$, of $\sum_{v \in e}deg_H(v)$.

\begin{theorem}\label{agntheorem}
If $V_1, V_2, \ldots ,V_m$ is a partition of the vertex set of the line graph $G=L(H)$ of a hypergraph $H$ and  $|V_i| \ge \tilde{\Delta}(H)$ then the system has an ISR.
\end{theorem}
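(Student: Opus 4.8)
The plan is to derive Theorem \ref{agntheorem} from the topological version of Hall's theorem, exactly as Theorem \ref{aabtheorem} is proved in \cite{aab}, but working in the hypergraph setting. Recall that the topological Hall theorem guarantees an ISR for the system $(V_1,\ldots,V_m)$ as soon as, for every $I\subseteq[m]$, the independence complex $\mathcal{I}(G[\bigcup_{i\in I}V_i])$ of the subgraph induced on the union of the classes in $I$ is $(|I|-2)$-connected (topologically connected). So the entire task reduces to a connectivity estimate: if $W$ is a union of $k$ of the colour classes (so $|W|\ge k\,\tilde\Delta(H)$ when they all have size at least $\tilde\Delta(H)$), then $\mathcal{I}(L(H)[W])$ must be shown to be $(k-2)$-connected.

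The key tool for this is the result of Meshulam (or the Aharoni--Berger--Ziv nerve/domination technique) bounding the connectivity of an independence complex of a graph from below in terms of a "game"/domination parameter, of the form: $\mathcal{I}(\Gamma)$ is $\eta$-connected whenever $\Gamma$ cannot be "cleared" too cheaply, i.e. for every induced subgraph one can always find a vertex whose closed neighbourhood plus the vertices dominated has controlled size. Concretely, the step I would carry out is: the independence complex of $L(H)[W]$ is $(\psi(\lvert W\rvert)-1)$-connected where, when one deletes an edge $e$ of $H$ (a vertex of $L(H)$) and all vertices of $L(H)$ adjacent to it, one removes at most $\tilde\Delta(H)$ vertices of $W$ — since the edges of $H$ meeting $e$ number at most $\sum_{v\in e}\deg_H(v)\le\tilde\Delta(H)$, counting $e$ itself. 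Iterating this "peel off one edge, lose at most $\tilde\Delta(H)$ colours" argument $k$ times shows $\mathcal{I}(L(H)[W])$ is $(k-2)$-connected, which is precisely the hypothesis needed to feed into topological Hall.

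The main obstacle — and the place where the line-graph structure (as opposed to an arbitrary graph with the same maximum degree) is essential — is getting the bound $\tilde\Delta(H)$ rather than $2\Delta(G)$ or even $\Delta(G)+2$ in this peeling step. When we pick an edge $e=\{v_1,\ldots,v_r\}$ of $H$ to "use up", the neighbours of $e$ in $L(H)$ are the edges sharing a vertex with $e$; the subtlety is that this count is exactly $\sum_{i}(\deg_H(v_i)-1)+1\le\tilde\Delta(H)$, and one must choose $e$ within the currently surviving subhypergraph so that this local bound still applies to the restricted instance, i.e. that the relevant degrees only go down under deletion. I would handle this by always selecting $e$ to be an edge realizing (or bounded by) the maximum in the current subhypergraph and checking that $\tilde\Delta$ is monotone under taking subhypergraphs, so the inductive estimate $\psi(j\cdot\tilde\Delta(H))\ge j-1$ propagates. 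Once this combinatorial bound is in hand, the topological connectivity conclusion and the application of topological Hall are formal.
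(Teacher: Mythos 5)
You are right that the outer frame is topological Hall: Theorem~\ref{agntheorem} follows once one shows that for every union $W$ of $k$ of the classes, $\eta_H(\mathcal{I}(L(H)[W]))\ge k$, i.e.\ a connectivity bound of the type $\eta_H(\mathcal{I}(L(H)[W]))\ge |W|/\tilde\Delta(H)$. Note, however, that the paper does not prove this theorem at all: it quotes it from \cite{agn}, and that connectivity bound \emph{is} the cited result, not a formality one can dispatch after the reduction.

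The genuine gap is the peeling step itself. None of the tools you invoke licenses "remove one hyperedge $e$ together with its $L(H)$-neighbourhood, pay at most $\tilde\Delta(H)$ vertices, gain one unit of connectivity, iterate." In Meshulam's recursion (the game of Theorem~\ref{etaPsi}) the $+1$ is earned only when NON explodes an \emph{edge of $L(H)$}, i.e.\ a pair of intersecting hyperedges $e,f$, and such an explosion removes $N[e]\cup N[f]$, which can be close to $2\tilde\Delta(H)$ vertices; played naively this yields only the Haxell-type bound of Theorem~\ref{deltah}, not $\tilde\Delta(H)$. The vertex-by-vertex alternative, the decomposition $\eta_H(\mathcal{I}(\Gamma))\ge\min\bigl(\eta_H(\mathcal{I}(\Gamma-v)),\,\eta_H(\mathcal{I}(\Gamma-N[v]))+1\bigr)$, does give $+1$ at cost $|N[v]|\le\tilde\Delta(H)$ on one branch, but the adversary may instead take the deletion branch $\Gamma-v$, which gains nothing, so it cannot simply be "iterated $k$ times." Indeed, if your peeling principle were valid as stated it would prove $\eta_H(\mathcal{I}(\Gamma))\ge |V(\Gamma)|/(\Delta(\Gamma)+1)$ for every graph $\Gamma$, which is false already for $\Gamma=C_4$: its independence complex is disconnected, so $\eta_H=1<4/3$. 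Hence the line-graph structure must enter far more substantially than through monotonicity of $\tilde\Delta$ under subhypergraphs and a good choice of $e$; saving the factor $2$ is exactly the hard content of \cite{aab} (via eigenvalue bounds for $K_{1,k}$-free graphs) and of \cite{agn}. The paper's own Section~4 is instructive here: even to win a factor $3$ for subgraphs of $K_{n,n}$ it needs a carefully designed CON strategy (the "lagging" device, returning isolated vertices, parity bookkeeping of the two sides) with amortized edge counts; that is the kind of argument your proposal would have to supply at the point where it currently asserts the estimate.
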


To see why Theorem  \ref{agntheorem} generalizes Theorem \ref{aabtheorem} note that if $H$ is $r$-uniform and linear (no two edges meet at more than one vertex), then
$\Delta(G) = \max_{e \in H} \sum_{v \in e}deg_H(v)-r$, meaning that $\tilde{\Delta}(H)=\Delta(G)+r$.

 Stein's conjecture can be viewed as expressing the special ISR properties of line graphs. In ISR terminology (rainbow matchings, in this case), it is:

\begin{conjecture}\label{steinknn}
Any partition $V_1, \ldots ,V_n$  of $E(K_{n,n})$ into sets of size $n$ has a partial rainbow matching of size $n-1$.
\end{conjecture}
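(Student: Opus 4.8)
The plan is to read the statement as a \emph{deficiency\,-one} ISR problem for the line graph $G=L(K_{n,n})$ and to attack it with the topological version of Hall's theorem. Writing $V_1,\dots,V_n$ for the colour classes, a partial rainbow matching of size $n-1$ is exactly a partial ISR of size $n-1$ for this system in $G$. The deficiency form of the topological Hall theorem (Aharoni--Haxell) reduces the whole statement to a single family of connectivity estimates: the size of a maximum partial ISR is at least $n-\max_{I}\bigl(|I|-\eta(\ci(G[\bigcup_{i\in I}V_i]))\bigr)$, where $\eta$ is the topological connectivity of the independence complex, so it suffices to show that for \emph{every} $\emptyset\ne I\subseteq[n]$ one has $\eta\bigl(\ci(G[\bigcup_{i\in I}V_i])\bigr)\ge |I|-1$. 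Since $G=L(K_{n,n})$, the complex $\ci(G[\bigcup_{i\in I}V_i])$ is precisely the matching complex $\ci(L(F_I))$ of the bipartite graph $F_I:=\bigcup_{i\in I}V_i\subseteq E(K_{n,n})$, which has exactly $|I|\,n$ edges and maximum degree at most $n$. Thus the entire conjecture becomes: $\eta(\ci(L(F_I)))\ge |I|-1$ for every $I$.

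The engine for such a bound is the same mechanism that underlies \tref{deltah}--\tref{agntheorem}: for line graphs the connectivity of the independence complex is governed by vertex degrees. The standard Meshulam-type estimate gives connectivity of order $|V(H)|/(2\Delta(H))$; applied to $H=L(F_I)$, where $|V(L(F_I))|=|I|\,n$ and $\Delta(L(F_I))\le 2\Delta(F_I)-2\le 2n-2$, this yields only $\eta\gtrsim |I|/2$, enough for a matching of size about $n/2$. A more careful, fractional or iterated analysis---replacing $\Delta$ by an averaged degree and exploiting that $F_I$ lives inside $K_{n,n}$---is what improves the constant and pushes the guaranteed deficiency down to $\tfrac{n}{3}$, i.e.\ gives $\eta\ge |I|-\tfrac{n}{3}$ in the worst case and hence a matching of size $\tfrac23 n$. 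First I would reprove this $\tfrac23$-bound in the connectivity language, so that the real target $\eta\ge |I|-1$ is laid bare as the quantity still to be sharpened.

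To close the remaining gap I would exploit the two features special to a genuine partition of $E(K_{n,n})$ into $n$ classes of size $n$. First, the system is \emph{globally balanced}: every vertex of $K_{n,n}$ is incident with exactly $n$ edges, so in the dangerous small-$|I|$ cases the edges of $F_I$ cannot all pile onto a few vertices without forcing the omitted classes to be degree-poor; this should let one replace the crude bound $\Delta(F_I)\le n$ by something much closer to the average degree $|I|$, which is exactly what is needed to turn $\eta\gtrsim|I|/\Delta(F_I)$-type estimates into $\eta\ge|I|-1$. Second, I would use the freedom to \emph{discard one class optimally} together with a deletion/induction scheme on $n$: choosing which $V_i$ to drop and then deleting a well-chosen matching edge with its incident row and column reduces an instance on $K_{n,n}$ to one on $K_{n-1,n-1}$ with classes of size close to $n-1$, the bookkeeping being exactly that the $2n-1$ deleted edges fall, on average, about two per class.

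The hard part---and the reason the size-$(n-1)$ statement remains a conjecture while $\tfrac23 n$ is provable---is precisely this last connectivity inequality. The available topological lower bounds for matching complexes lose exactly the factor of two separating the hypothesis $|V_i|=n$ from the threshold $\Delta(G)+2\approx 2n$ of \tref{aabtheorem}; no current estimate extracts $\eta(\ci(L(F_I)))\ge |I|-1$ from the bare data $|E(F_I)|=|I|\,n$ and $\Delta(F_I)\le n$. The decisive obstruction is the high-degree, small-$|I|$ regime, in which a single vertex of $K_{n,n}$ can absorb up to $|I|$ edges of $F_I$ and the matching complex can genuinely have low connectivity; converting the global balance of the partition into a \emph{local} connectivity gain in exactly these configurations is the crux on which any proof of the full $n-1$ bound hinges, and is where I expect the argument to be the most delicate.
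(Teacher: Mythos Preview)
This statement is listed in the paper as a \emph{conjecture} (\cjref{steinknn}), not a theorem; the paper does not prove it. What the paper proves is the weaker \tref{stein23}, giving a partial rainbow matching of size $\tfrac{2}{3}n-\tfrac12$, via exactly the deficiency--Hall framework you describe: \tref{deficiency} combined with the connectivity estimate \tref{eta}, namely $\eta_H(\cm(F))\ge |F|/n-n/3-1/2$ for every $F\subseteq E(K_{n,n})$. So your first three paragraphs correctly reconstruct the paper's route to the $\tfrac23 n$ bound, and your acknowledgement that the full $n-1$ statement is open is accurate.

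Where your analysis goes wrong is in the diagnosis of the remaining gap. You write that ``the entire conjecture becomes: $\eta(\ci(L(F_I)))\ge |I|-1$ for every $I$'', and then treat the difficulty as one of sharpening \emph{lower} bounds on $\eta$ in the ``high-degree, small-$|I|$ regime''. But this sufficient condition is actually \emph{false}, and the binding case is the opposite extreme. For $I=[n]$ one has $F_I=K_{n,n}$, and by \tref{blz} together with the Shareshian--Wachs equality cited immediately after it, $\eta_H(\cm(K_{n,n}))=\floor{2n/3}$, which is strictly less than $n-1$ for every $n\ge 4$. Thus the hypothesis of \tref{deficiency} with $d=1$ genuinely fails; the $\tfrac23$ is not a loose inequality but the exact connectivity of the chessboard complex. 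No refinement of the connectivity lower bound can therefore push the direct topological-Hall argument past $\tfrac23 n$.

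In particular, your proposed closing strategies---exploiting global balance of the partition to control $\Delta(F_I)$, or inducting on $n$ by deleting a row and column---cannot be aimed at establishing $\eta\ge |I|-1$, since that target is unattainable already for the full edge set. Any proof of the $n-1$ bound must go beyond the deficiency form of \tref{ah}; the paper makes no claim in that direction, and the problem remains open.
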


If true, this would clearly imply a conjecture that has the advantage of being negotiable in small cases:

\begin{conjecture}\label{equirep}
Suppose that  the edges of $K_{n,n}$ are partitioned into
 sets $E_1, E_2, \ldots ,E_m$ ($m\le n$). Then there exists a perfect matching $F$
 in $K_{n,n}$ satisfying $|F \cap E_i| \ge \floor* {\frac{|E_i|}{n}
 }-1$, with strict inequality holding for all but one value
 of $i$.
\end{conjecture}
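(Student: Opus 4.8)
\medskip
\noindent The plan is to deduce \cjref{equirep} from Stein's conjecture in its rainbow-matching form, \cjref{steinknn}, by a \emph{repackaging} argument: refine the coarse partition $E_1,\dots,E_m$ into a partition of $E(K_{n,n})$ into exactly $n$ parts of size $n$, apply the (conjectured) partial rainbow matching of size $n-1$, and then count how the resulting matching meets the original sets.

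In detail: write $|E_i|=q_in+r_i$ with $0\le r_i<n$, so that $q_i=\lfloor|E_i|/n\rfloor$. From $\sum_i|E_i|=n^2$ we get $n\sum_iq_i+\sum_ir_i=n^2$, hence $\sum_ir_i=kn$ for $k:=n-\sum_iq_i\ge 0$. Split each $E_i$ into $q_i$ parts of size exactly $n$, which we call the \emph{pure $E_i$-parts}, plus a leftover of size $r_i<n$. Lining up all the leftovers and cutting the resulting list of $kn$ edges into $k$ consecutive blocks of size $n$ gives $k$ further parts, the \emph{mixed parts}. Altogether this is a partition of $E(K_{n,n})$ into $\sum_iq_i+k=n$ parts, each of size $n$, and crucially every pure $E_i$-part is contained in $E_i$. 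Applying \cjref{steinknn} to this partition yields a partial rainbow matching $M$ of size $n-1$, which misses exactly one part $B_0$; extend $M$ to a perfect matching $F$ of $K_{n,n}$ by adding the unique edge joining its two uncovered vertices. For each $i$, every pure $E_i$-part other than $B_0$ contributes a distinct edge lying in $E_i$ to $M$, so
\[
|F\cap E_i|\ \ge\ |M\cap E_i|\ \ge\ q_i-[\,B_0\text{ is a pure }E_i\text{-part}\,]\ \ge\ \lfloor|E_i|/n\rfloor-1.
\]
Since $B_0$ is a pure $E_i$-part for at most one index $i$ (and for none if $B_0$ is a mixed part), the inequality $|F\cap E_i|\ge\lfloor|E_i|/n\rfloor$ is strict for all but at most one value of $i$, which is exactly the assertion.

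This settles \cjref{equirep} assuming \cjref{steinknn}; what remains is an unconditional proof, and here I expect the genuine obstacle to be precisely the one this paper confronts for Stein's conjecture itself. The strongest unconditional input is a partial rainbow matching of size about $\tfrac23n$ rather than $n-1$, and plugging that into the repackaging above allows up to $\approx\tfrac13n$ pure parts of a single $E_i$ to be missed, so one only recovers $|F\cap E_i|\gtrsim\lfloor|E_i|/n\rfloor-\tfrac13n$ — far too lossy for the $-1$ slack demanded by \cjref{equirep}. A direct assault would instead feed the $n$-part partition above into the topological Hall machinery of the next section, the delicate point being to track the single exceptional index through the matching-extension step; I expect this bookkeeping, rather than the Hall verification, to be the crux.
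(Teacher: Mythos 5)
The statement you were asked about is one of the paper's \emph{conjectures}, not one of its theorems: the paper offers no proof of \cjref{equirep}, only the one-sentence remark that \cjref{steinknn}, if true, would ``clearly imply'' it, together with the pointer that the case $m=3$ was proved in \cite{abcklz} using Sperner's lemma. Your repackaging argument is a correct and complete proof of exactly that implication: splitting each $E_i$ into $q_i=\lfloor |E_i|/n\rfloor$ pure parts of size $n$, pooling the leftovers (whose total size is a multiple of $n$) into mixed parts, applying \cjref{steinknn} to the resulting partition of $E(K_{n,n})$ into $n$ parts of size $n$, and observing that the single missed part is a pure part of at most one $E_i$, so that $|F\cap E_i|\ge q_i-[\,B_0\text{ is a pure }E_i\text{-part}\,]$ with at most one exceptional index. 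This is surely the derivation the authors have in mind when they call the implication clear, and your details (the extension of $M$ to a perfect matching, the vacuous case $q_i=0$) check out.

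However, as you yourself acknowledge, this does not prove the statement: it reduces \cjref{equirep} to \cjref{steinknn}, which is Stein's conjecture itself, i.e.\ precisely the open problem the paper only approximates. So the unconditional content of \cjref{equirep} is untouched, and your proposal cannot be counted as a proof of it; the paper likewise leaves it open, citing only the $m=3$ case as settled. Your closing diagnosis of the obstruction is accurate: feeding \tref{stein23} (a partial rainbow matching of size about $\tfrac23 n$) into your repackaging can miss up to roughly $\tfrac n3$ pure parts of one and the same $E_i$, and applying Theorem~\ref{deficiency} together with Theorem~\ref{eta} directly to your refined partition does no better, since a union of $|I|$ parts is only guaranteed connectivity about $|I|-\tfrac n3$. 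In short: a correct conditional reduction that mirrors the paper's stated (but unproved) implication, not a proof of the conjecture.
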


In \cite{abcklz} this was proved for $m=3$, using Sperner's lemma. The following  conjecture suggests that line graphs of bipartite graphs behave not only twice, but possibly $4$ times, better than ordinary graphs with respect to ISRs:

\begin{conjecture}\label{deltaplus1}
If $V_1, V_2, \ldots ,V_m$ is a partition of  the edge set of the line graph $L(H)$ of a simple bipartite graph $H$ and $|V_i| > \Delta(H)+1$ then the system has an ISR.
\end{conjecture}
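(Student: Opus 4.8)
The approach I would take uses the topological version of Hall's theorem of Aharoni and Haxell, the device already exploited in this paper. For a set $U$ of edges of $H$ write $H[U]$ for the spanning subgraph of $H$ with edge set $U$, and for a graph $G$ write $\cm(G)$ for its matching complex; this is precisely the independence complex $\ci(L(G))$, so an ISR of $V_1,\dots,V_m$ is exactly a rainbow matching of $H$. Writing $\eta(\cdot)$ for homotopical connectivity $+\,2$, it would suffice, by topological Hall, to establish
\[
\eta\bigl(\cm(H[U_I])\bigr)\ \geq\ |I|\qquad\text{for every }I\subseteq[m],\quad\text{where }U_I:=\bigcup_{i\in I}V_i .
\]
Since $V_1,\dots,V_m$ partitions $E(H)$ into classes of size $\geq\Delta(H)+2$, we have $|U_I|\geq|I|\,(\Delta(H)+2)$, while $\Delta(H[U_I])\leq\Delta(H)$ and $H[U_I]$ is again bipartite; so every sub-instance to be checked is itself an instance of the conjecture with fewer classes. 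The whole thing would thus reduce to one connectivity estimate:
\[
\eta(\cm(G))\ \geq\ \floor*{\frac{|E(G)|}{\Delta(G)+2}}\qquad\text{for every bipartite graph }G,
\]
which indeed yields $\eta(\cm(H[U_I]))\geq\floor*{|U_I|/(\Delta(H[U_I])+2)}\geq|I|$.

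To prove this estimate I would induct on $|E(G)|$, picking an edge $e=uv$, relating $\cm(G)$ to $\cm(G-e)$ and to the link of $e$, which is $\cm(G-u-v)$, through the usual Mayer--Vietoris / cofibre sequence, and bounding the drop in connectivity at each deletion. The point where bipartiteness has to be used is exactly here: one wants the cost of a deletion to be only $\Delta(G)+2$ rather than the $\approx 2\Delta(G)$ behind \tref{aabtheorem} and \tref{agntheorem} (whose hypothesis $|V_i|\geq\tilde\Delta(H)$, with $\tilde\Delta(H)\leq 2\Delta(H)$, the conjecture roughly halves for bipartite $H$), and for this one would invoke K\"onig's edge-colouring theorem ($\chi'(H)=\Delta(H)$), the absence of odd components in residual bipartite graphs, and the fact that $G-e$ and $G-u-v$ stay bipartite with the degree bound preserved.

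The step I expect to be the real obstacle --- and the reason this is still a conjecture --- is exactly this last one: the displayed estimate is \emph{false}, so topological Hall cannot be run head-on. For $G=K_{n,n}$ the complex $\cm(K_{n,n})$ is the chessboard complex, whose connectivity was determined by Bj\"orner, Lov\'asz, Vre\'cica and \v{Z}ivaljevi\'c: $\eta(\cm(K_{n,n}))=\floor*{\frac{2n+1}{3}}$. Taking $H=K_{n,n}$ with any partition of $E(K_{n,n})$ into $m=\floor*{\frac{n^2}{n+2}}=n-2$ classes of size $\geq n+2$, the topological Hall condition at $I=[m]$ already demands $\eta(\cm(K_{n,n}))\geq n-2$, which fails for every $n\geq 8$; note this does not contradict the conjecture, since topological Hall is only sufficient. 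A correct proof would therefore have to replace the crude appeal to connectivity by an argument that sees the actual homology of the complexes $\cm(H[U_I])$, or else drop topology in favour of a combinatorial augmenting argument --- take a maximum partial rainbow matching $F$, suppose it misses a class $V_j$, and grow an alternating tree from the blocked edges of $V_j$, using the bipartiteness of $H$ (no odd obstructions, K\"onig duality) to force $|F|=m$. Wringing the sharp constant $\Delta(H)+2$ out of such an argument, rather than the factor-two-weaker bound the general methods give, is where the difficulty really lies.
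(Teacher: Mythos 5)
The statement you were asked about is \cjref{deltaplus1}, which the paper states as an open conjecture and does not prove; there is therefore no proof of it in the paper to compare against, and your text, quite rightly, does not claim to supply one. What you have written is a diagnosis of why the obvious attack fails, and that diagnosis is essentially correct: running the topological Hall theorem (\tref{ah}) head-on would require a connectivity bound of the shape $\eta_H(\cm(G)) \ge \floor*{|E(G)|/(\Delta(G)+2)}$ for bipartite $G$, and this is false, since for $G=K_{n,n}$ the left-hand side is only about $\frac{2n}{3}$ (the paper's \tref{blz}, with equality by Shareshian and Wachs; the exact value is $\floor*{\frac{2n+1}{3}}$ in the shifted convention rather than your $\floor*{\frac{2n+1}{3}}$ versus the paper's $\floor*{\frac{2n}{3}}$ --- a discrepancy of at most one that does not affect the argument), whereas a partition of $E(K_{n,n})$ into $m\approx n-2$ classes of size $n+2$ would demand $\eta_H \ge n-2$. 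Since \tref{ah} is only a sufficient condition, this blocks the method without refuting the conjecture, exactly as you say. This obstruction is consistent with the paper's own choices: the authors use the topological machinery only in deficiency form (\tref{deficiency} together with \tref{eta}) to get a partial rainbow matching of size about $\frac{2}{3}n$, precisely because the connectivity of the matching complex of $K_{n,n}$ caps what this route can deliver.

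The only caveat is that your closing suggestions (exploiting the actual homology of the complexes $\cm(H[U_I])$ beyond connectivity, or a purely combinatorial alternating-tree argument) are speculative sketches, not arguments; in particular the claim that bipartiteness should buy the constant $\Delta(H)+2$ in place of the roughly $2\Delta(H)$ of \tref{deltah}--\tref{agntheorem} is exactly the content of the conjecture, so nothing is proved. As a referee's summary: you have correctly identified the statement as open, correctly identified the reason the paper's own toolkit does not settle it, and offered no proof --- which matches the status of the statement in the paper.
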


The inequality $|V_i| \ge \Delta(H)+1$ does not suffice, as is shown by a well known example \cite{jin,yuster}: $V_j=\{a_{i,j} \mid i\le 3\}$, ~~$j=1,2,3,4$, and $F$ consists of three $C_4$s: $a_{11}a_{12}a_{21}a_{22}$,~~$a_{13}a_{14}a_{23}a_{24}$ and $a_{31}a_{33}a_{32}a_{34}$.
 The simplicity of $H$ is necessary since in non simple graphs Theorem \ref{deltah} is sharp, as shown in the following example:
  $V_i, ~i=1,\ldots,k$ consists of pairs of edges, $e_i=(a_i,b_i),~~f_i=(c_i,d_i)$, each repeating $k$ times, and $V_0=\bigcup_{i \le k}\{(a_i,d_i),(b_i,c_i)\}$. Here $|V_i|=2k, \Delta(H)=k+1$ and there is no rainbow matching.

\section{A topological tool}

For a graph $G$ denote by $\ci(G)$ the complex (closed down hypergraph) of  independent sets in $G$.
If $G=L(H)$,  then $\ci(G)$ is the complex of matchings in $H$. We shall denote this complex by $\cm(H)$.

There is a topological version of Hall's theorem, that allows to extend Hall's theorem to the setting of ISRs. Here are some relevant preliminary notions.
 A simplicial complex  $\cc$ is called (homologically)
\emph{$k$-connected}  if for every $-1 \le j \le k$, the $j$-th reduced simplicial homology group of $\cc$ with rational coefficients $\tilde{H}_j(\cc)$ vanishes. The (homological) \emph{connectivity} $\eta_H(\cc)$ is the largest $k$ for which $\cc$ is $k$-connected, plus $2$.

\begin{remark}
\noindent
\begin{itemize}
\item[(a)] This is a shifted (by $2$) version of the usual definition of connectivity. This shift simplifies the statements below, as well as the statements of basic properties of the connectivity parameter.
\item[(b)] If $\tilde{H}_j(\cc)=0$ for all $j$ then we define  $\eta_H(\cc)$ as $\infty$.
\item[(c)] There exists also a homotopical notion of connectivity, $\eta_h(\cc)$: it is the minimal dimension of a ``hole'' in the complex.  The first topological version of Hall's theorem \cite{ah} used that notion.
The relationship between the two parameters is that $\eta_H\ge \eta_h$ for all complexes, and if $\eta_h(\cc)\ge 3$, meaning that the complex is simply connected, then $\eta_H(\cc)= \eta_h(\cc)$.  All facts mentioned in this article (in particular, the main tool we use, the Meshulam game) apply also to $\eta_h$.
\end{itemize}
\end{remark}

\begin{notation}
  Given a graph $G$, a partition $(V_i)_{i=1}^n$ of $V(G)$ and a subset $I$ of $[n]=\{1,\ldots,n\}$, we write $V_I$ for $\bigcup_{i\in
    I}V_i$. We denote by $\ci(G) \upharpoonright A$ the complex of independent sets in the graph induced by $G$ on $A$.
\end{notation}

The topological version of Hall's theorem is:

\begin{theorem}\label{ah}
  If $\eta_H(\ci(G) \upharpoonright V_I) \ge |I|$ for every $I \subseteq
  [n]$ then there exists an ISR.
\end{theorem}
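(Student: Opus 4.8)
\textbf{Proof proposal for Theorem~\ref{ah}.}

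The plan is to invoke the topological version of Hall's theorem in the form of a deficiency/connectivity argument, deducing the existence of an ISR from the hypothesis by an inductive selection of representatives, exactly as in the proof of the homotopical version in \cite{ah}. Concretely, one builds the ISR one vertex at a time: having independently chosen representatives $v_i \in V_i$ for $i$ in some set $J \subsetneq [n]$, one wants to extend the choice to one more index $j \notin J$ by picking $v_j \in V_j$ that is non-adjacent to all of $v_i$, $i \in J$. The obstruction to such an extension lies in the topology of the link of the already-chosen simplex inside $\ci(G) \upharpoonright V_{[n]\setminus J}$, and the connectivity hypothesis $\eta_H(\ci(G)\upharpoonright V_I)\ge |I|$ for every $I$ is precisely what is needed to push the induction through.

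The key steps, in order, are as follows. First, record the two standard monotonicity/behaviour properties of $\eta_H$ that make the induction work: (i) if $\cc'$ is obtained from $\cc$ by deleting a vertex $v$, then $\eta_H(\cc) \ge \min\{\eta_H(\cc'),\, \eta_H(\mathrm{lk}_{\cc}(v)) + 1\}$ (a Mayer--Vietoris / cone-decomposition estimate, since $\cc = \mathrm{del}_{\cc}(v) \cup \bigl(v * \mathrm{lk}_{\cc}(v)\bigr)$ glued along $\mathrm{lk}_{\cc}(v)$); and (ii) for an independent set $\sigma$ in $G$, $\mathrm{lk}_{\ci(G)}(\sigma) = \ci(G)\upharpoonright N$, where $N$ is the set of vertices non-adjacent to every vertex of $\sigma$ and disjoint from $\sigma$. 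Second, set up the induction: process the classes $V_1,\dots,V_n$ in order; at stage $k$, supposing representatives $v_1,\dots,v_{k-1}$ have been chosen forming an independent set, consider the complex $\cc_k = \ci(G)\upharpoonright V_{\{k,\dots,n\}}$ restricted further to the common non-neighbourhood $W$ of $v_1,\dots,v_{k-1}$; one shows $\eta_H$ of this complex is at least $n-k+1 \ge 1$, hence it is in particular $(-1)$-connected, i.e. nonempty, and moreover that $V_k \cap W \ne \emptyset$ so that a valid representative $v_k$ exists. The bookkeeping that $\eta_H$ of the relevant restricted complex stays $\ge$ (number of classes still to be served) is where property (i) is applied repeatedly, each deletion of a neighbour of a previously chosen vertex costing at most one unit of connectivity, balanced against the hypothesis $\eta_H(\ci(G)\upharpoonright V_I)\ge|I|$ applied to the shrinking index set $I$.

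The main obstacle is precisely controlling the drop in connectivity when restricting from $\ci(G)\upharpoonright V_I$ to its subcomplex supported on the common non-neighbourhood of the already-chosen representatives: each previously chosen $v_i$ may be adjacent to many vertices of $V_I$, and deleting them all could in principle destroy the connectivity needed. The resolution is to not delete them all at once but to interleave the choices with the deletions and to observe that the $i$-th chosen representative is itself a vertex of the complex at the moment it is chosen, so that passing to its link (rather than deleting its entire neighbourhood blindly) loses only one unit of $\eta_H$ by (i), and there are only $k-1 < |I|$ such steps; since we start from $\eta_H(\ci(G)\upharpoonright V_{\{k,\dots,n\}}) \ge n-k+1$ by hypothesis, after the $k-1$ link operations we retain connectivity $\ge (n-k+1) - (k-1) $\dots\ one arranges the order of operations (taking links first, restricting to remaining classes after) so that the hypothesis is applied to the full index set $\{k,\dots,n\}$ and the link operations are absorbed, yielding residual connectivity $\ge 1$, hence a nonempty link meeting $V_k$. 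This is the content of the argument in \cite{ah}, and we refer there for the detailed verification of the two monotonicity properties.
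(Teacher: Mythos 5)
The paper does not prove Theorem~\ref{ah} at all: it quotes it, citing Proposition~1.6 of Meshulam \cite{me2} (with homotopical precursors in \cite{ah}). So the only question is whether your sketch is itself a valid proof, and it is not: there is a genuine gap at the central step. Your property (i), $\eta_H(\cc)\ge\min\{\eta_H(\mathrm{del}_{\cc}(v)),\eta_H(\mathrm{lk}_{\cc}(v))+1\}$, is an inequality in the wrong direction for your purposes. It bounds the connectivity of the big complex from below (this is exactly how it is used in the Meshulam game, Theorem~\ref{etaPsi}), but your induction needs the reverse: that passing to the link of a chosen representative loses at most one unit of connectivity, i.e. $\eta_H(\mathrm{lk}_{\cc}(v))\ge\eta_H(\cc)-1$. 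That statement is false in general; for instance, if $G$ consists of two disjoint edges plus an isolated vertex $v$, then $\ci(G)$ is a cone (so $\eta_H=\infty$), while the link of $v$ is the independence complex of $2K_2$, a $4$-cycle, with $\eta_H=2$. Moreover, the hypothesis of the theorem only controls complexes of the form $\ci(G)\upharpoonright V_I$; the complexes your induction actually produces (restrictions to the common non-neighbourhood of the chosen representatives, intersected with the remaining classes) are not of this form, so the hypothesis cannot simply be ``applied to the shrinking index set.'' Your own bookkeeping shows the problem: $(n-k+1)-(k-1)$ is negative for large $k$, and the sentence claiming the link operations can be ``absorbed'' by reordering is not an argument.

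This is not a repairable bookkeeping issue but a wrong approach: a greedy, one-representative-at-a-time scheme in which each choice costs one unit of connectivity inherently needs a hypothesis of roughly $2|I|$ rather than $|I|$ (compare the easy $2\Delta$-type results such as Theorem~\ref{basic}); the entire point of Theorem~\ref{ah} is to achieve the factor-one bound, and also nonemptiness of the residual complex would not by itself guarantee that it meets the specific class $V_k$, which you assert without proof. The known proofs are global rather than greedy: Aharoni--Haxell \cite{ah} triangulate a simplex whose vertices correspond to the classes and apply a Sperner-type labelling argument, while Meshulam \cite{me2} proves the homological version by induction on $n$ using exact (Mayer--Vietoris type) sequences relating $\ci(G)\upharpoonright V_I$ for different $I$, never committing to a single representative prematurely. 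To fix your write-up within the spirit of the paper, the honest course is to do what the authors do, namely cite \cite{me2}, or else reproduce one of those global arguments in full.
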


Variants of this theorem appeared implicitly in \cite{ah} and \cite{me1}, and the theorem is stated and proved explicitly as Proposition~1.6 in \cite{me2}.

In order to apply Theorem~\ref{ah}, one needs good lower bounds on $\eta_H(\ci(G))$. One of the known bounds is  in terms of a graph parameter called the ``independence domination number'' and denoted by $i\gamma$. For a graph $G$, $i\gamma(G)$ is the maximum, over all independent sets $I \in \ci(G)$, of the minimal number of vertices needed to dominate $I$. In \cite{ah} the following was proved for the homotopic connectivity, and this was later extended in \cite{me1} to homological connectivity:

\begin{theorem}\label{igamma}
$\eta_H(\ci(G)) \ge i\gamma(G)$.
\end{theorem}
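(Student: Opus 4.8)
\textbf{Proof proposal for \tref{igamma}.}

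The plan is to verify the hypothesis of a covering/nerve-type lower bound on connectivity by exhibiting, for every independent set $I \in \ci(G)$, a small dominating set whose associated ``stars'' cover $\ci(G)$ in a way that controls its topology. Concretely, suppose $k = i\gamma(G)$ and let $J \in \ci(G)$ be an independent set realizing the maximum, so every dominating set of $J$ has at least $k$ vertices; dually, for \emph{every} independent set $I$ there is a set $D(I)$ of at most $k$ vertices dominating $I$ (indeed we may take $|D(I)| \le i\gamma(G) = k$ by definition, choosing $D(I)$ to be a minimum dominator of $I$). I would first recast the claim: it suffices to show that whenever $\ci(G)$ is \emph{not} $(k-2)$-connected, one can extract from a nontrivial cycle an independent set admitting a dominating set smaller than $k$, contradicting $i\gamma(G) = k$. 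This is the contrapositive formulation that matches the ``hole of dimension $< k-2$'' intuition behind $\eta_H$.

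The key steps, in order: (1) Recall that for a vertex $v$, the complex $\ci(G)$ decomposes along $v$ as the union of the star $\mathrm{st}(v) = \{\sigma : \sigma \cup \{v\} \in \ci(G)\} * \{v\}$ (a cone, hence contractible) and the deletion $\ci(G \setminus \{v\})$, glued along the link $\mathrm{lk}(v) = \ci(G \setminus N[v])$. By the Mayer--Vietoris sequence, $\eta_H(\ci(G)) \ge \min\{\eta_H(\ci(G\setminus\{v\})),\ \eta_H(\ci(G\setminus N[v]))+1\}$ — this is the standard ``one vertex'' connectivity estimate. (2) Iterate this over the vertices of a dominating set: if $D = \{v_1,\ldots,v_t\}$ dominates $V(G)$ entirely, then peeling off $v_1,\ldots,v_t$ one at a time, the ``deletion'' branch eventually reaches the empty graph on $V(G)\setminus D$ — but wait, $D$ dominating $V(G)$ only means $V(G)\setminus D \subseteq N(D)$, so after removing all of $D$ the residual graph need not be edgeless. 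So instead (2$'$): induct on $|V(G)|$. Pick any vertex $v$; by induction $\eta_H(\ci(G\setminus\{v\})) \ge i\gamma(G\setminus\{v\})$ and $\eta_H(\ci(G\setminus N[v])) \ge i\gamma(G\setminus N[v])$. (3) The combinatorial heart is then the pair of inequalities $i\gamma(G\setminus\{v\}) \ge i\gamma(G)$ is \emph{false} in general, so one must choose $v$ cleverly: take $J$ a maximizing independent set for $i\gamma(G)$ and choose $v$ to lie in a minimum dominating set of $J$; then show $i\gamma(G\setminus N[v]) \ge i\gamma(G) - 1$ by observing that any independent set $I' \in \ci(G\setminus N[v])$ with a dominating set $D'$ of size $< k-1$ would give $D' \cup \{v\}$ dominating $I' \cup (\text{something in } J)$ of size $< k$ — filling in the details of which independent set of $G$ to use here is exactly the delicate point.

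I expect the main obstacle to be precisely step (3): making the right choice of $v$ and proving the two recursive estimates $i\gamma(G\setminus\{v\}) \ge i\gamma(G)$ (on the deletion branch) and $i\gamma(G\setminus N[v]) \ge i\gamma(G)-1$ (on the link branch) simultaneously, since a naive choice of $v$ can make $i\gamma$ \emph{drop} in the deletion branch. The resolution should be to not delete a single worst vertex but to set up the induction so that we only ever need the link-branch estimate to lose one, while the deletion-branch is handled by a cleverer measure (e.g. inducting on $|V(G)| + $ some secondary parameter, or passing to an induced subgraph on which $i\gamma$ is ``critical''). Once the recursion $\eta_H(\ci(G)) \ge \min\{\eta_H(\ci(G\setminus\{v\})),\ \eta_H(\ci(G\setminus N[v]))+1\} \ge \min\{i\gamma(G),\ (i\gamma(G)-1)+1\} = i\gamma(G)$ closes, the base case (edgeless graph, where $\ci(G)$ is a simplex, hence $\eta_H = \infty$, and $i\gamma = 1$) is immediate and we are done.
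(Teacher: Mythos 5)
You should note first that the paper contains no proof of \tref{igamma}: it is quoted, with the homotopic version attributed to \cite{ah} and the homological extension to \cite{me1}. So the comparison is with the standard argument rather than with an in-paper one. Your steps (1)--(2$'$) are exactly that standard engine: $\ci(G)=\ci(G\setminus\{v\})\cup\bigl(\{v\}*\ci(G\setminus N[v])\bigr)$, glued along $\ci(G\setminus N[v])$, and Mayer--Vietoris gives $\eta_H(\ci(G))\ge\min\{\eta_H(\ci(G\setminus\{v\})),\ \eta_H(\ci(G\setminus N[v]))+1\}$. The problem is that your write-up stops precisely where the proof has to happen: step (3) is declared ``the delicate point'' and is never carried out, and the remedies you float (a secondary induction parameter, passing to a ``critical'' induced subgraph) are neither developed nor in fact needed. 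As submitted, this is a plan with an acknowledged hole at its combinatorial heart, not a proof.

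The gap closes with one specific choice that you did not make. Let $k=i\gamma(G)$ and fix an independent set $I$ whose minimum dominating set has size $k$. If $I=V(G)$ then $G$ is edgeless, $\ci(G)$ is a simplex and $\eta_H=\infty$; otherwise pick $v\notin I$. For the deletion branch: $I$ is still an independent set of $G\setminus\{v\}$, and any $D\subseteq V(G)\setminus\{v\}$ dominating $I$ there also dominates $I$ in $G$ (deleting a vertex outside $I$ only shrinks the pool of dominators), so $i\gamma(G\setminus\{v\})\ge k$ --- the inequality you dismissed as ``false in general'' holds for every $v\notin I$, which is why no cleverer measure is required. For the link branch (valid for \emph{every} $v$): the independent set to use is $I\setminus N[v]$; if $D'$ dominates it in $G\setminus N[v]$, then $D'\cup\{v\}$ dominates $I$ in $G$, since $v$ dominates $I\cap N[v]$, whence $|D'|\ge k-1$ and $i\gamma(G\setminus N[v])\ge k-1$. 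Induction on $|V(G)|$ (with the isolated-vertex and empty-link degeneracies absorbed by the cone observation) then yields $\eta_H(\ci(G))\ge\min\{k,(k-1)+1\}=k$. In short: your architecture is the right one for the homological statement, but the proof is missing exactly the choice ``$v\notin I$'' and the two short verifications above; with them supplied the argument is complete (the homotopical version proved in \cite{ah} proceeds differently, but that is not what the statement here asserts).
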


An easy corollary is:

\begin{equation}\label{nuover2}
\eta_H(\ci(G)) \ge \nu(G)/2\; ,
\end{equation}
where $\nu(G)$ is the maximal size of a matching in $G$.

A general lower bound on $\eta_H(\ci(G))$ is essentially due to Meshulam \cite{me2} and is conveniently expressed in terms of a
game between two players, CON and NON, on the graph $G$. CON wants
to show high connectivity, NON wants to thwart his attempt. At each
step, CON chooses an edge $e$ from the graph remaining at this
stage, where in the first step the graph is $G$. NON can then either

\begin{enumerate}
\item
delete $e$ from the graph (we call such a step a ``deletion''),

or

\item remove the two endpoints of $e$, together
  with all neighbors of these vertices and the edges incident to them, from the graph (we call such a step
  an ``explosion'', and denote by $G*e$ the resulting graph).

\end{enumerate}

\noindent The result of the game (payoff to CON) is defined as follows: if at some point there
remains an isolated vertex, the result is $\infty$. Otherwise, at some
point all vertices have disappeared, in which case the result of the
game is the number of explosion steps. We define $\Psi(G)$ as the value of the game, i.e., the result obtained by optimal play on the graph $G$.

\begin{convention} \label{conventionmin} Henceforth we shall assume that NON always chooses the best strategy for him, namely he removes $e$ if   $\min(\Psi(G-e), \Psi(G*e)+1)=\Psi(G-e)$, and explodes it if $\min(\Psi(G-e), \Psi(G*e)+1)=\Psi(G*e)+1$. \end{convention}

\begin{theorem}\label{etaPsi}
$\eta_H(\ci(G))\ge \Psi(G)$.
\end{theorem}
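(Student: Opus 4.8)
The plan is to prove the statement by induction on the number of vertices of $G$, following the structure of the Meshulam game and using the standard Mayer--Vietoris mechanism for homological connectivity. First I would handle the base and degenerate cases: if $G$ has an isolated vertex $v$, then $\ci(G)$ is a cone with apex $v$ (every independent set can be enlarged by $v$), hence contractible, so $\tilde H_j(\ci(G))=0$ for all $j$ and $\eta_H(\ci(G))=\infty$, consistent with the game value $\infty$. If $G$ has no edges but also no isolated vertex, it is empty, and one checks the boundary convention directly. Otherwise $G$ has at least one edge, and $\Psi(G)\ge 1$.

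Next, the inductive step. Let $e=xy$ be the edge CON plays in an optimal strategy realizing $\Psi(G)$; by Convention~\ref{conventionmin} we have $\Psi(G)=\min(\Psi(G-e),\,\Psi(G*e)+1)$, so it suffices to show $\eta_H(\ci(G))\ge\Psi(G-e)$ and $\eta_H(\ci(G))\ge\Psi(G*e)+1$. The key algebraic tool is the following decomposition of the matching/independence complex along the edge $e$: write $\ci(G)=\ci(G-e)\cup \st(e)$, where $\st(e)$ is the set of independent sets of $G$ containing both $x$ and $y$ together with their faces. Then $\st(e)$ is a cone (with apex, say, the simplex $e$ itself viewed appropriately, or more precisely $\st(e)$ deformation retracts to the vertex obtained by adding $e$), hence contractible; and the intersection $\ci(G-e)\cap \st(e)$ is exactly the complex of independent sets of $G$ that are disjoint from $\{x,y\}$ after removing their neighbors — which, up to adding the two free vertices $x,y$ as a suspension/join, is $\ci(G*e)$. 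Feeding this into the reduced Mayer--Vietoris sequence
\[
\cdots \to \tilde H_j(\ci(G-e)\cap \st(e)) \to \tilde H_j(\ci(G-e))\oplus \tilde H_j(\st(e)) \to \tilde H_j(\ci(G)) \to \tilde H_{j-1}(\ci(G-e)\cap \st(e))\to\cdots,
\]
and using that $\tilde H_j(\st(e))=0$, one reads off that $\tilde H_j(\ci(G))=0$ provided $\tilde H_j(\ci(G-e))=0$ and $\tilde H_{j-1}(\ci(G-e)\cap\st(e))=0$. By the induction hypothesis $\eta_H(\ci(G-e))\ge\Psi(G-e)$, so $\tilde H_j(\ci(G-e))=0$ for $j\le \Psi(G-e)-2$; and since $\ci(G-e)\cap\st(e)$ is $\ci(G*e)$ with two vertices joined on (a double suspension, shifting connectivity up by $2$), the induction hypothesis gives $\eta_H$ of that intersection at least $\Psi(G*e)+2$, so $\tilde H_{j-1}=0$ for $j-1\le\Psi(G*e)$, i.e.\ $j\le\Psi(G*e)+1$. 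Taking the minimum of the two constraints, $\tilde H_j(\ci(G))=0$ for all $j\le \min(\Psi(G-e),\Psi(G*e)+1)-2=\Psi(G)-2$, which is exactly $\eta_H(\ci(G))\ge\Psi(G)$.

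The main obstacle I anticipate is making the two structural identifications precise and compatible with the $(+2)$-shifted connectivity bookkeeping: (i) that $\st(e)$, as a subcomplex of $\ci(G)$, is genuinely contractible (it is the set of faces that become faces again after adjoining both $x$ and $y$, so it is star-shaped around the face $\{x,y\}$, but one must verify this is a legitimate subcomplex whose union with $\ci(G-e)$ is all of $\ci(G)$); and (ii) that the intersection $\ci(G-e)\cap\st(e)$ is isomorphic, as a complex, to the join of $\ci(G*e)$ with the two isolated vertices $x$ and $y$ — equivalently a double suspension — so that $\eta_H$ increases by exactly $2$. Getting the off-by-one (really off-by-two, given the shifted convention) exactly right in the Mayer--Vietoris degree count, including the low-degree tail of the sequence where $j=0$ or $j=-1$ and the reduced homology conventions bite, is where the care is needed; everything else is the standard deletion/contraction induction. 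I would also remark that the same argument works verbatim for $\eta_h$ using the Seifert--van Kampen theorem in place of Mayer--Vietoris, which is how Meshulam originally phrased it.
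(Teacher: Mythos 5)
Your overall plan—induction along the Meshulam game, a deletion/explosion decomposition, a contractible star, and Mayer--Vietoris—is exactly the standard argument; note the paper itself does not prove Theorem~\ref{etaPsi} but cites Theorem~1 of Adamaszek--Barmak for it, and your intended route is that one. However, both of the structural identifications you yourself flag as the crux are wrong as stated. First, the decomposition $\ci(G)=\ci(G-e)\cup \mathrm{st}(e)$ is false: deleting an edge \emph{enlarges} the independence complex, so $\ci(G)\subseteq\ci(G-e)$, and ``independent sets of $G$ containing both $x$ and $y$'' do not exist at all, since $xy$ is an edge of $G$. The correct decomposition goes the other way: $\ci(G-e)=\ci(G)\cup B$, where $B$ is the star in $\ci(G-e)$ of the face $\{x,y\}$, i.e.\ $B=\sigma_{xy}*\ci(G*e)$ with $\sigma_{xy}$ the full simplex on $\{x,y\}$; this $B$ is a cone, hence contractible. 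Second, the intersection $\ci(G)\cap B$ equals $\partial\sigma_{xy}*\ci(G*e)$, the join of $\ci(G*e)$ with the two \emph{isolated} vertices $x,y$, i.e.\ with $S^0$: that is a single suspension, raising $\eta_H$ by exactly $1$, not a ``double suspension'' raising it by $2$ as you claim.

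These two errors happen to offset each other in your final degree count, which is why you land on $\min(\Psi(G-e),\Psi(G*e)+1)$; but the Mayer--Vietoris sequence you write down is attached to a decomposition that does not exist, so the argument as written does not go through. The repair is short: apply Mayer--Vietoris to $\ci(G-e)=\ci(G)\cup B$. Exactness at $\tilde H_j(\ci(G))\oplus\tilde H_j(B)$, together with $\tilde H_j(B)=0$, shows $\tilde H_j(\ci(G))=0$ whenever $\tilde H_j(\ci(G-e))=0$ and $\tilde H_j\bigl(\partial\sigma_{xy}*\ci(G*e)\bigr)=\tilde H_{j-1}(\ci(G*e))=0$, i.e.\ for all $j\le\min\bigl(\eta_H(\ci(G-e)),\,\eta_H(\ci(G*e))+1\bigr)-2$. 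With the induction hypothesis, your (correct) base cases (an isolated vertex makes $\ci(G)$ a cone, matching the game value $\infty$), and CON choosing $e$ optimally so that $\Psi(G)=\min(\Psi(G-e),\Psi(G*e)+1)$, this gives $\eta_H(\ci(G))\ge\Psi(G)$; the ``$+1$'' comes from the single suspension, and no double shift is available or needed. This corrected version is precisely the proof in the reference the paper cites.
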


\begin{remark}
 The idea underlying this lower bound originated in \cite{me2}. The game theoretic formulation first appeared in \cite{abz}. This formulation of $\Psi$ is equivalent to a recursive definition of $\Psi(G)$ as the maximum over all edges of $G$, of the minimum between $\Psi$ of the graph obtained by deleting the edge, and $\Psi$ of the graph obtained by exploding it plus $1$. For an explicit proof of Theorem~\ref{etaPsi} using the recursive definition of $\Psi$, see Theorem~1 in \cite{adba}.

\end{remark}

A standard argument of adding dummy vertices yields the  deficiency version of Theorem \ref{ah}:
\begin{theorem}\label{deficiency}
If $\eta_H(\ci(G[\bigcup_{i \in I}V_i])) \ge |I|-d$ for every $I \subseteq [m]$ then the system has a partial  ISR of size $m-d$.
\end{theorem}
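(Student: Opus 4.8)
The plan is to deduce \tref{deficiency} from \tref{ah} (its $d=0$ case) by the classical device of adjoining dummy vertices, in the spirit of the way the deficiency form of Hall's theorem follows from Hall's theorem. The twist is that the dummies must be organised so that three things hold simultaneously: (i) the enlarged family is still a genuine \emph{partition}, as \tref{ah} requires; (ii) the homological connectivity of every relevant induced independence complex is raised by at least $d$; and (iii) every ISR of the enlarged system uses at most $d$ dummy representatives. Given such a gadget, \tref{ah} produces an ISR of the enlarged system, and deleting the (at most $d$) dummy choices leaves the desired partial ISR.

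Concretely, I would proceed as follows. Assume $(V_i)_{i=1}^m$ partitions $V(G)$ and $d\ge 1$ (for $d=0$ there is nothing beyond \tref{ah}). For each $i\in[m]$ and $k\in[d]$ introduce a new vertex $z_i^k$, put $V_i^+=V_i\cup\{z_i^1,\dots,z_i^d\}$, and form $G^+$ from $G$ by adding, for each $k$, all edges inside $Z^k:=\{z_1^k,\dots,z_m^k\}$ (so each $Z^k$ spans a copy of $K_m$), with no edges joining distinct layers $Z^k,Z^{k'}$ and none joining any $Z^k$ to $V(G)$. Thus the dummy part of $G^+$ is a disjoint union of $d$ cliques $K_m$, the $k$-th of which is a transversal of the new partition $(V_i^+)_{i=1}^m$, which is again a partition, now of $V(G^+)$.

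Next I would verify the hypothesis of \tref{ah} for $(V_i^+)$ on $G^+$. Fix $\emptyset\ne I\subseteq[m]$. Since no dummy is adjacent to $V(G)$, the induced graph $G^+[V_I^+]$ is the disjoint union of $G[V_I]$ with $d$ disjoint copies of $K_{|I|}$, so $\ci(G^+[V_I^+])$ is the join of $\ci(G[V_I])$ with $d$ copies of $\ci(K_{|I|})$, the latter being a discrete set of $|I|$ points. Using additivity of $\eta_H$ under joins, together with the elementary fact that a non-empty complex has $\eta_H\ge 1$, the second factor has $\eta_H\ge d$, so $\eta_H(\ci(G^+[V_I^+]))\ge(|I|-d)+d=|I|$; the case $I=\emptyset$ is immediate. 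Hence \tref{ah} yields an ISR $\{v_1,\dots,v_m\}$ with $v_i\in V_i^+$, independent in $G^+$. The dummies among the $v_i$ form an independent set in $\bigsqcup_k Z^k$, so, each $Z^k$ being a clique, they lie in distinct layers and there are at most $d$ of them; the remaining $\ge m-d$ of the $v_i$ lie in $V(G)$, are independent in $G$, and occupy distinct original parts --- a partial ISR of the original system of size $\ge m-d$.

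I expect the only real work to lie in designing and verifying the dummy gadget; everything around it is bookkeeping. The naive recipe --- throw the \emph{same} $d$ extra vertices into every set, as one does for ordinary Hall --- fails here, because \tref{ah} genuinely needs a partition: shared dummies destroy disjointness, and although they turn every relevant complex into a contractible cone, the underlying rainbow-SDR structure can be hopeless (e.g.\ in $K_3$ with one vertex per class). The layered clique above is precisely the compromise between two opposing demands: the dummy graph must be dense enough that its independence number is only $d$ (so an ISR absorbs at most $d$ dummies), yet sparse enough that each induced dummy subcomplex still has connectivity at least $d$ (so every $\ci(G^+[V_I^+])$ gains the full $d$). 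The step that needs care is this connectivity lower bound for all $I$ at once, including the degenerate sizes $|I|\in\{0,1\}$, where the dummy subcomplex is a simplex rather than a sphere-like complex.
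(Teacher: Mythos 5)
Your proof is correct and is precisely the ``standard argument of adding dummy vertices'' that the paper invokes for Theorem~\ref{deficiency} without supplying details: your $d$ transversal dummy cliques keep the enlarged system a genuine partition, raise $\eta_H$ of every induced independence complex by $d$ via superadditivity under joins, and force at most $d$ dummy representatives, so Theorem~\ref{ah} yields the partial ISR of size $m-d$. Nothing further is needed.
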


Our main result is:

\begin{theorem}\label{stein23}
A partition $V_1, \ldots ,V_n$  of $E(K_{n,n})$ into sets of size $n$ has a partial rainbow matching of size at least $\frac{2}{3}n- \frac{1}{2}$.

\end{theorem}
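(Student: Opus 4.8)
The plan is to apply the deficiency version of the topological Hall theorem (Theorem~\ref{deficiency}) to the line graph $G=L(K_{n,n})$ with the partition $V_1,\dots,V_n$. By Theorem~\ref{deficiency}, it suffices to find a value $d$ with $d \le \frac{n}{3}+\frac{1}{2}$ such that for every $I\subseteq[n]$ we have $\eta_H(\cm(K_{n,n})\upharpoonright V_I)\ge |I|-d$, where $\cm(K_{n,n})\upharpoonright V_I$ is the matching complex of the subgraph $H_I$ of $K_{n,n}$ whose edge set is $V_I=\bigcup_{i\in I}V_i$. So the whole problem reduces to a lower bound on the homological connectivity of matching complexes of bipartite graphs in terms of the number of parts $|I|$ and the sizes $|V_i|=n$.

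The key tool for the connectivity estimate should be the Meshulam game bound $\eta_H(\ci(G))\ge\Psi(G)$ (Theorem~\ref{etaPsi}), applied to the line graph of $H_I$; equivalently one plays a matching-oriented version of the game directly on $H_I$ (CON picks a pair of disjoint edges, i.e.\ an edge of $L(H_I)$, NON either deletes that edge of $L(H_I)$ or explodes, removing both edges of $H_I$ together with all edges of $H_I$ sharing a vertex with either of them). The goal is to show CON can guarantee at least $\lceil |I|/? \rceil$ explosions before an isolated vertex of $L(H_I)$ appears, i.e.\ before $H_I$ degenerates into a graph whose only matchings are single edges. Here one exploits that $H_I$ has $|I|\cdot n$ edges (with multiplicity, but after discarding repeated edges $H_I$ is a simple bipartite graph with at most $n^2$ edges), that it lies inside $K_{n,n}$, and crucially the counting fact that a set of $kn$ edges in $K_{n,n}$ cannot be "thin": one can always find many disjoint edges, or more precisely, after removing the at most $2\Delta$ edges killed by an explosion one still has at least $(k-2)n$-ish edges, letting CON iterate. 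Turning this bookkeeping into exactly the bound $|I|-d$ with $d\le\frac{n}{3}+\frac12$ is where the constant $\frac23$ must come from.

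Concretely, I expect the argument to run as follows. Fix $I$ with $|I|=k$. If $k\le\frac{n}{3}+\frac12$ the required inequality $\eta_H\ge k-d\;(=\text{possibly negative})$ is automatic since $\eta_H\ge -1$ always, i.e.\ a $(-1)$-connected nonempty complex — so the content is only for large $k$. For $k>\frac{n}{3}+\frac12$, I would run the Meshulam game on $L(H_I)$ and argue that CON survives at least $k-d$ explosions. A natural CON strategy: always present an edge of $L(H_I)$ (a pair of disjoint $K_{n,n}$-edges) chosen to lie in a large "robust" part of the current graph. Each NON move either deletes one edge of $L(H_I)$ (which, since $L(H_I)$ has on the order of a positive fraction of $\binom{kn}{2}$ edges, can happen only boundedly often relative to explosions before CON can force an explosion) or explodes, destroying at most $2\Delta(H_I)\le 2(2n-?)$ edges of $H_I$ — but since $H_I\subseteq K_{n,n}$, each vertex has degree at most $n$, so an explosion removes at most $\approx 2n$ edges of $H_I$. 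Starting from roughly $kn$ edges and needing to keep at least $\ge 2$ disjoint edges (i.e.\ avoid an isolated vertex in $L(H_I)$, which corresponds to $H_I$ having a matching of size $\ge 2$), CON can force about $\frac{kn - cn}{2n}$-many explosions; choosing constants so this is $\ge k-\big(\frac n3+\frac12\big)$ gives the theorem.

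The main obstacle, I expect, is the fine control of the Meshulam game against NON's deletion option: NON will preferentially delete rather than explode, so one must show that in a graph with $kn$ edges sitting inside $K_{n,n}$, CON can repeatedly find edges of the line graph that NON is "forced" (by Convention~\ref{conventionmin}) to explode, because deleting them would still leave a highly connected complex. This amounts to a structural/extremal lemma: a simple bipartite graph on $[n]\sqcup[n]$ with many edges either contains a large induced subgraph on which CON wins easily (high $\nu$, so bound \eref{nuover2} or $i\gamma$ via Theorem~\ref{igamma} applies), or admits a "good" edge of its line graph forcing an explosion. Balancing the edge-count lost per explosion ($\le 2n$) against the edge budget ($kn$), and handling the rounding carefully, is what produces the precise constant $\frac23$ and the additive $-\frac12$; I anticipate that the bulk of the paper's technical work lies exactly there, possibly via a clean auxiliary lemma of the form "$\Psi(L(H))\ge \lceil |E(H)|/(2n) \rceil$ for $H\subseteq K_{n,n}$ without isolated edges" or a refinement thereof.
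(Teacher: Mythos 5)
Your reduction is the same as the paper's: apply Theorem~\ref{deficiency} to the partition of $E(K_{n,n})$, so that everything rests on a lower bound of the form $\eta_H(\cm(H_I))\ge |I|-\frac n3-\frac12$, which the paper states as Theorem~\ref{eta} ($\eta_H(\cm(F))\ge \frac{|F|}{n}-\frac n3-\frac12$ for $F\subseteq E(K_{n,n})$), and both you and the paper attack this via the Meshulam game bound $\eta_H\ge\Psi$ on the line graph. Up to that point the proposal is fine (modulo two slips: an edge of $L(H_I)$ is a pair of \emph{adjacent} edges of $H_I$, not disjoint ones, so an explosion removes the three vertices they span; and an isolated vertex of $L(H_I)$ is an \emph{isolated edge} of $H_I$, not the existence of a matching of size $2$).

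The genuine gap is that the connectivity bound itself — the entire content of the theorem — is not proved, and the bookkeeping you sketch cannot produce the constant $\frac23$. Your proposed accounting ("each explosion destroys at most about $2n$ edges, starting from $kn$ edges, so CON forces about $k/2$ explosions"), or the auxiliary lemma you guess, $\Psi(L(H))\ge\lceil |E(H)|/(2n)\rceil$, gives $\eta_H\gtrsim \frac{|I|}2$, hence via Theorem~\ref{deficiency} only a rainbow matching of size about $\frac n2$ — no better than what \eref{nuover2} already yields, and strictly weaker than the claimed $\frac23 n-\frac12$. The paper's proof gains the extra factor by a much more delicate game: CON always plays at a vertex of minimal positive degree, the two sides are kept balanced ($0\le|X_i|-|Y_i|\le1$) so that each explosion removes only two or three vertices with a prescribed split between the sides (the "lagging" trick, in which NON's deletions around $y$ are exploited so that a later explosion removes only two vertices); the edge loss per step is then bounded by $n_i+\delta_i-2$ or $|X_i|+\delta_i-1$, with $\delta_i\le\lceil(n_i-n_t)/2\rceil$ proved from the fact that the terminal vertex set $V_t$ is independent in every $G_i$; finally one tracks the potential $k(G_i)=\frac{n_i}2-\frac{2e_i}{n_i}$ and derives a contradiction at the first index $j$ where $k_j\le k_0<k_{j+1}$. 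None of this (nor any substitute for it) appears in your proposal, so as written it establishes at best a $\frac n2$-type bound, not the theorem.
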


By Theorem \ref{deficiency} the theorem will follow from:

\begin{theorem}\label{eta}
If $F \subseteq E(K_{n,n})$ then

\begin{equation}\label{mainineq} \eta_H(\cm(F)) \ge \frac{|F|}{n}-\frac{n}{3} - \frac{1}{2}.\end{equation}
\end{theorem}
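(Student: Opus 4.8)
The plan is to prove \eqref{mainineq} by playing the Meshulam game on the line graph $G = L(F)$, exploiting the bipartite structure of $F \subseteq E(K_{n,n})$. Here $\cm(F) = \ci(L(F))$, so by Theorem~\ref{etaPsi} it suffices to show that CON can guarantee at least $\frac{|F|}{n} - \frac{n}{3} - \frac{1}{2}$ explosion steps. The key numerical fact is that $K_{n,n}$ has $2n$ vertices, so a matching in $F$ has at most $n$ edges, and more importantly, when CON picks an edge $e = (a,b)$ of $F$ and NON explodes it, the vertices removed from $L(F)$ are exactly the edges of $F$ incident to $a$ or $b$; since $\deg_F(a) + \deg_F(b) \le$ (something controlled by $n$), an explosion destroys few edges relative to $n$. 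The goal is to run the game so that each explosion "costs" at most roughly $n$ edges of $F$ plus a correction, while deletions are absorbed into the $-\frac{n}{3}-\frac{1}{2}$ slack term.

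First I would set up the induction on $|F|$ (or on $|V(L(F))| = |F|$). CON's strategy: at each stage, with a current subgraph $F' \subseteq E(K_{n,n})$, choose a vertex $v$ of $K_{n,n}$ of maximum remaining degree $d = \deg_{F'}(v)$ and an edge $e = (v,w) \in F'$. If NON deletes $e$, we recurse on $F' - e$ with $|F'| - 1$ edges; I want to argue that over any maximal run of consecutive deletions at $v$, once $\deg_{F'}(v)$ drops low enough CON switches vertices, and the total number of deletions is bounded. If NON explodes, all edges of $F'$ meeting $v$ or $w$ vanish; the number removed is $\deg_{F'}(v) + \deg_{F'}(w) - 1 \le 2d - 1$. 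The crude recursion $\Psi(F') \ge \min(\Psi(F'-e),\, \Psi(F' * e) + 1)$ then gives, very roughly, $\Psi(F) \ge \frac{|F|}{D}$ for the max degree $D$ — but that is not good enough because $D$ can be as large as $n$. The improvement must come from the fact that *high-degree vertices are rare*: there are only $2n$ vertices total, so at most $n$ of them can have degree exceeding $\frac{|F|}{2n}$, and the "expensive" explosions (costing close to $2 \cdot \frac{|F|}{2n}$) can happen at most about $\frac{n}{?}$ times before the remaining graph has uniformly small degree, at which point the $\nu/2$ bound \eqref{nuover2} or $i\gamma$ bound (Theorem~\ref{igamma}) can finish.

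More precisely, the two-regime argument I would run is: (1) while the maximum degree $D(F')$ exceeds $\frac{2}{3}n$ — equivalently while some vertex meets more than $\frac{2}{3}n$ edges — CON repeatedly picks edges at such a vertex; each deletion there can happen at most $n$ times (degrees are bounded by $n$ in $K_{n,n}$, but really one wants to bound the number of such high-degree "phases"), and each explosion removes at most $\approx \frac{4}{3}n$ edges; (2) once $D(F') \le \frac{2}{3}n$ throughout, apply the clean bound $\eta_H \ge \frac{\nu}{2} \ge \frac{1}{2}\lfloor \frac{|F'|}{\text{something}} \rfloor$ or better, $\eta_H \ge i\gamma(L(F'))$, and note $i\gamma$ of a line graph of a graph with max degree $\Delta$ is at least $\nu / (2\Delta - 1)$ roughly — calibrate so the two regimes combine to the coefficient $\frac{1}{n}$ in front of $|F|$ and the $-\frac{n}{3}$ loss. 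The constants $\frac{2}{3}$ and $\frac{1}{3}$ strongly suggest the threshold is degree $\frac{2n}{3}$ and that the worst case is an array like $A(i,j)=i$ (all edges at one vertex class), giving the extremal examples quoted in the introduction.

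**The main obstacle** will be controlling the interaction between deletions and explosions in the high-degree regime without losing a constant factor: a naive accounting ("each explosion kills $\le 2D-1$ edges, so we get $\ge |F|/(2D)$ explosions") is off by a factor of $2$ and ignores deletions entirely. The fix — and this is the heart of the proof — is to have CON always explode-force at a *single carefully chosen vertex* $v$, picking edges $(v, w_1), (v, w_2), \ldots$ in order of *decreasing* $\deg_{F'}(w_i)$, so that NON's explosions remove the high-degree $w_i$'s early and the residual degrees drop fast; one then shows that after $O(n)$ steps at $v$ the graph on the other side has max degree $\le \frac{n}{3}$, and a potential-function / amortized argument bounds the number of deletions by the decrease in a weight like $\sum_w \binom{\deg_{F'}(w)}{2}$. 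Getting this amortization to yield exactly $\frac{|F|}{n} - \frac{n}{3} - \frac{1}{2}$, rather than a weaker constant, is where the real work lies; I would expect the proof to split into several cases according to whether the "heavy" side of $F$ is the row-side or column-side of $K_{n,n}$, and to invoke Theorem~\ref{igamma} (bound via $i\gamma$) rather than Theorem~\ref{etaPsi} in the low-degree regime because $i\gamma$ of line graphs of bipartite graphs is genuinely larger than $\nu/2$.
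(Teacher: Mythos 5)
Your starting point (reduce to the Meshulam game on the line graph via Theorem~\ref{etaPsi}) coincides with the paper's, but the strategy you sketch does not work as described and the essential steps are left open. First, a structural error: the game is played on $L(F)$, so CON must offer an \emph{edge of $L(F)$}, i.e.\ a pair of adjacent edges of $F$, and NON's ``deletion'' removes that edge of the line graph --- it merely makes two edges of $F$ non-adjacent; it does not delete any edge of $F$. Your recursion ``if NON deletes $e$, recurse on $F'-e$ with $|F'|-1$ edges'' is therefore not a legal description of the game, and all of your accounting of deletions rests on it. The only leverage CON has against deletions is the threat of isolating a fixed vertex of $L(F)$ (a fixed edge of $F$) by offering all pairs through it, which gives the value $\infty$; this isolation threat is the engine of the paper's proof and is absent from your plan. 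Similarly, exploding a pair $(xy,xz)$ removes the edges of $F$ incident to the three vertices $x,y,z$, not to two vertices as you state, and the paper's ``lagging'' trick (first offering all pairs through $y$, so that a later explosion at $x$ removes only $x$ and $z$) is exactly what keeps the per-explosion damage down to two or three vertices of the bipartite graph.

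Second, the quantitative core is missing. Your two-regime scheme defers the amortization (``where the real work lies''), and the low-degree regime as proposed cannot deliver the required coefficient: with maximum degree around $\tfrac{2n}{3}$, the bounds you invoke ($\eta_H\ge\nu/2$, or $i\gamma$ estimated via $\nu/(2\Delta-1)$) give at best on the order of $\tfrac{3|F'|}{8n}$, far short of $\tfrac{|F'|}{n}-O(n)$; the hope that $i\gamma$ of line graphs of bipartite graphs is ``genuinely larger'' is unsubstantiated. Note also that your choice of a \emph{maximum}-degree vertex is the opposite of the paper's: the paper picks an edge at a vertex of \emph{minimal} positive degree (within the currently larger side), forces an explosion via the isolation threat, keeps the two sides balanced so that each sequence removes two or three vertices with controlled edge loss (at most $n_i+\delta_i-2$, resp.\ $|X_i|+\delta_i-1$, edges), and then a contradiction argument with the potential $k(G_i)=\tfrac{n_i}{2}-\tfrac{2e_i}{n_i}$, applied at the first index where this potential crosses $k(G_0)$, yields the exact bound \eqref{mainineq}. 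As it stands, your proposal is a plausible-sounding outline with a misdescribed game and an unproven central lemma, not a proof.
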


This is a generalization  of a theorem proved in \cite{bjornerlovaszziv}:

\begin{theorem}\label{blz}
$\eta_H(\cm(K_{n,n})) \ge \floor{ \frac{2n}{3} }$.
\end{theorem}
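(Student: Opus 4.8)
The plan is to deduce \tref{blz} from the Meshulam-game bound \tref{etaPsi}: since $\cm(K_{n,n}) = \ci(L(K_{n,n}))$, it suffices to prove $\Psi(L(K_{n,n})) \ge \lfloor 2n/3 \rfloor$, i.e.\ to exhibit a strategy for CON on $G = L(K_{n,n})$ guaranteeing at least $\lfloor 2n/3\rfloor$ explosions. The right level of generality for an induction is the statement $\Psi(L(K_{r,c})) \ge \lfloor (r+c)/3 \rfloor$, which gives \tref{blz} at $r=c=n$ and reduces cleanly: each forced explosion should consume exactly three ``lines'' (rows plus columns) of the board, so the $2n$ lines of $K_{n,n}$ sustain about $2n/3$ explosions. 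Concretely, the vertices of $G$ are the edges $(i,j)$ of $K_{n,n}$, adjacent when they share a row or a column; if CON names the edge of $G$ joining two board-edges $(a,c),(b,c)$ that share a column $c$, then exploding it removes $(a,c)$, $(b,c)$ and all their $G$-neighbours, which are exactly the edges meeting rows $a,b$ or column $c$ --- that is, it deletes rows $a,b$ and column $c$ in full, leaving $K_{r-2,c-1}$. Symmetrically a ``row cherry'' deletes $K_{r-1,c-2}$. Either way $r+c$ drops by $3$ and the explosion count rises by $1$, so $1 + \lfloor(r+c-3)/3\rfloor = \lfloor(r+c)/3\rfloor$, closing the induction provided the remaining board stays complete bipartite.

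The difficulty --- and the crux of the proof --- is that NON need not explode. When NON instead deletes the named edge of $G$, no board line disappears, yet the deletion erases one adjacency of the line graph, so $G$ is no longer a line graph and, worse, a later explosion at a cherry whose adjacencies have been pruned removes fewer vertices than the clean ``three lines'' count above. To compel explosions I would steer CON toward low-degree lines and exploit the rule that an isolated vertex yields value $\infty$ (the best possible outcome for CON): if a board-edge $(a,c)$ is the unique edge in both its row and its column it is already isolated in $G$, so NON must avoid creating such an edge. Thus CON repeatedly plays cherries centred at a line of minimum degree; each NON-deletion strictly lowers the degree of the chosen vertex of $G$, and once that degree reaches $1$ a further CON move forces NON to explode on pain of isolating the vertex. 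The main obstacle is to package this into a charging argument showing that, across any interleaving of deletions and explosions, the number of explosions stays at least $\lfloor(r+c)/3\rfloor$: one must account both for the deletions (which make no line-progress but must be shown to be limited and to drive the degrees down) and for the fact that pre-pruned cherries explode less efficiently, so the bookkeeping has to be on a potential of the general (non-line-graph) position rather than on the board alone.

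Two further points round out the plan. First, the weaker bounds available from \tref{igamma} and \eref{nuover2} do not reach $\tfrac23 n$: one checks that $i\gamma(L(K_{n,n})) = \lceil n/2\rceil$ (a perfect matching needs $\lceil n/2\rceil$ ``cross'' edges to dominate it, and no edge of $G$ dominates more than two matching edges), giving only $\eta_H \ge n/2$, so the game is genuinely needed. Second, to extract the exact floor rather than $\tfrac23 n - O(1)$, CON should alternate column-cherries and row-cherries so as to keep $r$ and $c$ balanced, and the three residues of $n \bmod 3$ together with the small-board base cases (e.g.\ $K_{2,1}$, $K_{2,2}$, $K_{1,1}$, where one verifies $\Psi = \lfloor(r+c)/3\rfloor$ directly, noting that $K_{1,1}$ already presents an isolated vertex and hence value $\infty$) must be handled by hand. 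An alternative, entirely topological route would bypass the game and compute the connectivity of the chessboard complex $\Delta_{n,n} = \cm(K_{n,n})$ directly via shellability or discrete Morse theory, but within the present framework the game argument above is the most economical.
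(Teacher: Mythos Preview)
Your overall plan---the Meshulam game on $L(K_{n,n})$---matches the paper's, which deduces \tref{blz} as the special case $F=E(K_{n,n})$ of \tref{eta} and proves the latter via exactly this game. But your proposal has one false step and one missing idea.

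The false step is the inductive hypothesis $\Psi(L(K_{r,c}))\ge\lfloor(r+c)/3\rfloor$. This fails badly for unbalanced boards: $L(K_{r,1})=K_r$, whose independence complex is $r$ isolated points, so $\eta_H(\cm(K_{r,1}))=1$ for every $r\ge 2$, while $\lfloor(r+1)/3\rfloor$ is unbounded. The correct bound for chessboard complexes is $\min(r,c,\lfloor(r+c+1)/3\rfloor)$, and any induction must carry that extra term; your remark that CON should ``keep $r$ and $c$ balanced'' is on the right track, but then you must argue that NON cannot force the board to become lopsided, and your stated hypothesis does not do this.

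The missing idea concerns deletions. You write that after NON deletes, ``$G$ is no longer a line graph'' and that the bookkeeping must be carried on ``a potential of the general (non-line-graph) position''. This is the point at which your argument stalls, and it is unnecessary. The device in the paper's proof of \tref{eta} is to have CON offer only pairs $(e,f)$ for a \emph{single fixed} edge $e=xy$: first all $f$ through $y$, then (if NON has deleted them all) all $f$ through $x$. NON is forced to explode one of these, and at that moment $e$ itself disappears---taking with it every deletion that was ever made, since all deletions were adjacencies of $e$. Hence the position after each forced explosion is exactly $L(G')$ for an induced subgraph $G'$ obtained by removing two or three vertices of the bipartite graph, and the whole analysis (including the potential $k(G)=|V|/2-2|E|/|V|$ that the paper tracks) lives on the bipartite board, never on a mutilated line graph. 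With this observation the ``charging argument over non-line-graph positions'' that you flag as the main obstacle simply does not arise.
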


In \cite{sw} it was shown that in fact equality holds in Theorem \ref{blz}.

\section{Proof of Theorem \ref{eta}}

For a graph $G = (V,F)$ write $k(G) = \frac{|V|}{2}-\frac{2|F|}{|V|}$ if $V \neq \emptyset$, and $k(G)=0$ if $V = \emptyset$.
In order to prove Theorem \ref{eta} it suffices to show that for $G$ bipartite with sides of equal size,

\begin{equation}\label{maininequality} \eta_H(\ci(L(G))) \geq \frac{2|F|}{|V|}-\frac{|V|}{6}-\frac{1}{2} = \frac{|V|}{3}-  k(G)-\frac{1}{2}.\end{equation}

For this purpose we play the Meshulam game on $L(G)$. Note that playing the game on $L(G)$
means offering pairs $(xy, xz)$ of adjacent edges in $G$. Exploding such a pair is tantamount to removing $x,y,z$ from $G$ and
deleting such a pair (edge in $L(G)$) means separating the edges $xy$ and $xz$, namely making them non-adjacent in $L(G)$. Below we shall use the fact that for a given edge $e$,  if CON offers to NON the pairs $(e,f)$
for all edges $f$ adjacent to $e$, if NON does not explode any of these pairs then the edge $e$ becomes isolated as a vertex of $L(G)$, giving the game the value $\infty$.

We break the game into sequences of consecutive steps,
each ending with the emergence of an isolated edge, meaning that the value of the game is $\infty$ (and then the game ends), or with an explosion.
The graph resulting from the $i$th sequence is denoted by $G_{i+1}$.
The construction will be so defined, that $G_{i+1}$ is obtained from $G_i$ by the removal of two or three vertices (this will be shown below). In particular, $G_{i+1}$ will be an induced subgraph of $G_i$.
Let  $G_0=G, G_1, \ldots, G_t$ be the sequence of graphs obtained. The process ends at a graph $G_t$ that has no edges. The fact that $G_{i+1}$ is obtained from $G_i$ by the removal of vertices means that
all graphs  $G_i$ are induced subgraphs of $G$. For $i \le t$ we shall denote $V(G_i)$ by $V_i$,  and $|V_i|$ by $n_i$.

Let $(P_0,Q_0)$ be the given bipartition
of the graph, and let $P_i=P_0\cap V_i,~~Q_i=Q_0\cap V_i$. Let $|P_i|=p_i, |Q_i|=q_i$.
The process will be so defined that
$|p_i-q_i| \le 1$.

We shall also maintain a naming $X_i,~Y_i$ to the two sides, by the following rules.
Let $X_0=P_0, Y_0=Q_0$.
  If $p_i\neq q_i$ then $X_i$ is
the larger of $P_i,~Q_i$ and $Y_i$ is the smaller of the two.
If $p_i=q_i$ and $i>0$, then  $X_i=Q_i$. The process will be so defined, that when removing three vertices, two vertices will be removed from $X_i$ and one vertex will be removed from $Y_i$. This implies:

\begin{equation}\label{balanced}
0 \le |X_i|-|Y_i|\le 1\; .
\end{equation}

By \eqref{balanced} (used for both $i$ and $t$):

\begin{equation*}\label{notfar}
-1 \le |X_i  \setminus V_t|- |Y_i  \setminus V_t|\le 1.
\end{equation*}

Since $|X_i  \setminus V_t|+ |Y_i  \setminus V_t| =n_i-n_t$, this entails:

\begin{equation}\label{smallworld}
\max(|X_i  \setminus V_t|, |Y_i  \setminus V_t|) = \ceil*{\frac{n_i-n_t}{2}}
\end{equation}

The inductive construction goes as follows. The graph  $G_i$ having been defined, we  choose a vertex $v$ in $G_i$, considering two possibilities:\\

POS1.~ There exists an isolated vertex in $X_i$. Let $v$ be a vertex of minimal positive degree in $G_i$.

POS2.~ There is no isolated vertex in $X_i$. In this case let $v$ be a vertex of minimal positive degree in $X_i$. \\

In both cases we denote the degree of $v$ in $G_i$ by $\delta_i$.

 Let $e=xy$ be an edge in $G_i$ containing $v$, where $x \in X_i$ and $y \in Y_i$. We denote by $d(y)$ the degree of $y$ in $G_i$. We distinguish between two cases: \textbf{Case I:} $d(y)>1$ and \textbf{Case II:} $d(y)=1$. 
We first consider Case I. In this case CON  offers NON the pairs $(e,f)$
(each such pair corresponds to an edge in $L(G)$), for all edges $f\neq e$ containing $y$, in  an arbitrary order.
Note that $e \cup f$ contains two vertices in $X_i$ and one vertex in $Y_i$.
 If NON explodes one of these pairs, the $i$th sequence of steps is thereby completed, and the graph $G_{i+1}$ is obtained.

If not, then NON has separated all pairs $(e,f)$, for all edges $f \neq e$ containing $y$, namely he removed the $L(G)$-edges connecting them. In this case (which we call ``lagging'')
 CON proceeds by offering  NON all pairs $(e,f)$ for all edges $f \neq e$ containing $x$.
 To avoid isolating $e$ (regarded as a vertex in $L(G)$), NON must explode a pair $(e,f)$, where, say, $f=xz$.
Since in this case all edges containing $y$ have been previously separated in $L(G)$ from $e$, they are not removed by such an explosion. The effect of the explosion is then that of removing $x$ and $z$ from $G_i$, while keeping the vertex $y$. Thus, so far only two vertices have been removed, one in $X_i$ and one in $Y_i$.

Still considering the lagging case, if POS1 above applies,  remove also a vertex in  $X_i$  that is isolated in $G_i$ (such exists, in this case. Note that this removal does not change $\eta(\ci(L(G_i)))$ ). Also note that in this case we  call ``$X_{i+1}$'' the opposite side to that of $X_i$, since it is this side that is bigger. If POS2 applies, then we do not remove a third vertex.

Now consider Case II. In this case CON offers NON pairs $(e,f)$ for all edges $f\neq e$ containing $x$. Note that such pairs exist, since the degree of $x$ must be greater than 1, otherwise the edge $e$ is isolated. Note also that NON must explode one of these pairs, otherwise $e$ will become isolated. In such an explosion two vertices from $Y_i$ and only one vertex from $X_i$ are removed, which may result in violation of (\ref{balanced}) for $X_{i+1}$ and $Y_{i+1}$. To avoid this, the isolated vertex $y$ is ``returned'' to the game, an action that does not affect the final score. If POS1 applies, remove an isolated vertex in $X_i$, so that a total of three vertices are removed. If POS2 applies, no further action is needed, so that only two vertices are removed in this step. 

Since in each step at most three vertices are deleted, we have

\begin{equation}\label{third} 
t \ge \frac{n_0-n_t}{3}\;
\end{equation}

Let $k_i = k(G_i)$. Since the value of the game played is $t$, Theorem \ref{eta} will follow from \eqref{maininequality} if we prove that
$t \geq  \frac{|V(G_0)|}{3}- k_0 -\frac{1}{2}$.
Thus we may assume for contradiction that

\begin{equation}
\label{forcontradiction}
t < \frac{n_0}{3}-  k_0 - \frac{1}{2} \;
\end{equation}

Combining \eqref{third} and \eqref{forcontradiction} yields

\begin{equation}\label{n0-nt:ineq}
\frac{n_0}{3}- \frac{n_t}{3} < \frac{n_0}{3}-  k_0 - \frac{1}{2}.
\end{equation}

Note that $k_0=k(G)=\frac{|V|}{2}-\frac{2|F|}{|V|}\ge \frac{2n}{2}-\frac{2n^2}{2n} = 0$. This, together with \eqref{n0-nt:ineq}, implies

\begin{equation}\label{ntlarge}
n_t \ge 2\; .
\end{equation}

Since $G_i$ is an induced subgraph of $G$ for all $i\le t$, and since $G_t$ is void of edges, we have

\begin{observation}\label{obs1}
$V_t$ is independent in every $G_i$.
\end{observation}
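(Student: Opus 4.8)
The plan is to exploit the feature of the construction, already verified in the preceding paragraphs, that each $G_{i+1}$ is obtained from $G_i$ purely by deleting vertices (two or three of them together with their incident edges), and never by deleting an edge of the underlying graph while keeping both its endpoints. Consequently every $G_i$ is an induced subgraph of $G_0=G$, and the vertex sets are nested downward, $V_t\subseteq V_{t-1}\subseteq\cdots\subseteq V_0=V(G)$. In particular $V_t\subseteq V_i$ for every $i\le t$, so it is meaningful to speak of the subgraph that $G_i$ induces on $V_t$, and proving the observation amounts to showing this subgraph is edgeless for each $i$.

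First I would record the hereditary (transitivity) property of induced subgraphs: if $G_i$ is the subgraph of $G$ induced on $V_i$, then for any $W\subseteq V_i$ the subgraph of $G_i$ induced on $W$ coincides with the subgraph of $G$ induced on $W$, since adjacency in an induced subgraph is inherited unchanged from the ambient graph. Applying this with $W=V_t$ shows that the subgraph of $G_i$ induced on $V_t$ equals the subgraph of $G$ induced on $V_t$, an object that does not depend on $i$.

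Next I would identify that common object with $G_t$ itself. Since $G_t$ is likewise the subgraph of $G$ induced on its own vertex set $V_t$, the subgraph of $G$ induced on $V_t$ is precisely $G_t$. Combining this with the previous step, the subgraph of $G_i$ induced on $V_t$ equals $G_t$ for every $i\le t$, and by the termination condition of the process $G_t$ has no edges. Hence $V_t$ spans no edge in any $G_i$, which is exactly the claim that $V_t$ is independent in every $G_i$.

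The only point that genuinely has to be checked, and the sole place where one must consult the construction rather than invoke general facts, is the monotonicity claim that every step removes vertices and not edges of $G$; this is what guarantees that each $G_i$ is induced in $G$. Once that is granted, the statement is an immediate consequence of the hereditary nature of induced subgraphs, so I do not expect any substantive obstacle beyond citing this monotonicity correctly.
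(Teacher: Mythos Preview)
Your proposal is correct and follows exactly the paper's own reasoning: the paper derives the observation in a single sentence from the already-established facts that every $G_i$ is an induced subgraph of $G$ and that $G_t$ is edgeless. You simply unpack this one-liner, making explicit the transitivity of induced subgraphs and the nesting $V_t\subseteq V_i$; there is no substantive difference in approach.
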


This and the definition of $\delta_i$ imply:
\begin{observation}\label{noisolatedinx}
If there exists a  vertex
 in  $X_i \cap V_t$ that is non-isolated in $G_i$ then $\delta_i \le |Y_i \setminus V_t|$.
\end{observation}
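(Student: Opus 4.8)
The plan is to exploit the independence of $V_t$ in $G_i$ (Observation~\ref{obs1}) to bound the degree of the hypothesised vertex, and then to use the minimality built into the choice of $v$.

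First I would fix a vertex $w\in X_i\cap V_t$ that is non-isolated in $G_i$, and examine its neighbourhood $N_{G_i}(w)$. Since $G_i$ is an induced subgraph of the bipartite graph $G$, the pair $(X_i,Y_i)$ is a bipartition of $G_i$, so $N_{G_i}(w)\subseteq Y_i$. Moreover, by Observation~\ref{obs1} the set $V_t$ is independent in $G_i$; as $w\in V_t$, no neighbour of $w$ in $G_i$ can lie in $V_t$. Combining these two facts gives $N_{G_i}(w)\subseteq Y_i\setminus V_t$, hence
\begin{equation*}
\deg_{G_i}(w)\le |Y_i\setminus V_t|.
\end{equation*}

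Next I would invoke the definition of $\delta_i$. In both cases POS1 and POS2 the vertex $v$ is chosen to have \emph{minimal positive degree}: over all vertices of $G_i$ in POS1, and over the vertices of $X_i$ in POS2. Since $w\in X_i$ and $\deg_{G_i}(w)>0$, the vertex $w$ is a candidate in the relevant minimisation in either case, so $\delta_i=\deg_{G_i}(v)\le \deg_{G_i}(w)$. Chaining this with the previous inequality yields $\delta_i\le |Y_i\setminus V_t|$, which is the assertion.

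I do not expect a genuine obstacle here: the statement is a direct consequence of Observation~\ref{obs1} together with the definitions, and the only points requiring (minor) care are that $(X_i,Y_i)$ is indeed a bipartition of the induced subgraph $G_i$ and that $w$ legitimately enters the minimisation defining $\delta_i$ in both of POS1 and POS2.
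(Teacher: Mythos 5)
Your proof is correct and matches the paper's intent: the paper derives this observation directly from Observation~\ref{obs1} and the definition of $\delta_i$, exactly as you do, by noting that any non-isolated $w\in X_i\cap V_t$ has all its neighbours in $Y_i\setminus V_t$ and then using the minimality of $\deg_{G_i}(v)$ in both POS1 and POS2. No gaps; your two points of care (bipartition of $G_i$ and $w$ entering the minimisation) are handled properly.
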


\begin{observation}\label{deltasmall}
$\delta_i \leq \ceil*{\frac{n_i-n_t}{2}}$.
\end{observation}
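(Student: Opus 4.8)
The plan is to reduce the statement via \eqref{smallworld}, so that it becomes $\delta_i \le \max(|X_i\setminus V_t|,|Y_i\setminus V_t|)$, and then to prove this inequality by a short case analysis on the position of the vertex $v$ that CON selects at stage $i$. The only structural facts I expect to need are: $V_t$ is independent in $G_i$ (Observation \ref{obs1}), which forces every vertex of $V_t$ to have all of its $G_i$-neighbours on the opposite side \emph{and} outside $V_t$; Observation \ref{noisolatedinx}; the balance inequality \eqref{balanced}; and the defining property of $\delta_i$ together with the POS1/POS2 rules.

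I would begin by disposing of the case in which $X_i\cap V_t$ contains a vertex that is non-isolated in $G_i$: Observation \ref{noisolatedinx} then gives $\delta_i\le|Y_i\setminus V_t|$ directly. So one may assume henceforth that every vertex of $X_i\cap V_t$ is isolated in $G_i$; in particular $v\notin X_i\cap V_t$. If $v\in Y_i\cap V_t$, independence of $V_t$ puts all of its neighbours in $X_i\setminus V_t$, so $\delta_i\le|X_i\setminus V_t|$. If $v\in Y_i\setminus V_t$, then $v$ must have been chosen by POS1 (POS2 always picks a vertex of $X_i$); any neighbour of $v$ in $X_i\cap V_t$ would be non-isolated, contradicting the standing assumption, so again all neighbours of $v$ lie in $X_i\setminus V_t$ and $\delta_i\le|X_i\setminus V_t|$.

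The remaining, and main, case is $v\in X_i\setminus V_t$, which I would split according to which rule produced $v$. Under POS2, $X_i$ has no isolated vertex, so the standing assumption forces $X_i\cap V_t=\emptyset$; hence $V_t\subseteq Y_i$, so $n_t\le|Y_i|$, and combining this with $|X_i|\ge|Y_i|$ from \eqref{balanced} gives $\max(|X_i\setminus V_t|,|Y_i\setminus V_t|)=|X_i|\ge|Y_i|\ge\delta_i$, the last step because every neighbour of $v$ lies in $Y_i$. Under POS1, $v$ has minimum positive degree over all of $G_i$: if $v$ has no neighbour in $Y_i\cap V_t$ then trivially $\delta_i\le|Y_i\setminus V_t|$, and otherwise a neighbour $w\in Y_i\cap V_t$ of $v$ is non-isolated with all its neighbours in $X_i\setminus V_t$, so $\delta_i\le\deg_{G_i}(w)\le|X_i\setminus V_t|$. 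In every branch $\delta_i\le\max(|X_i\setminus V_t|,|Y_i\setminus V_t|)$, which is the desired bound.

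The point I expect to require the most care is the interplay between POS1 and POS2: the minimality defining $\delta_i$ is only a \emph{global} property under POS1, whereas under POS2 it holds solely within $X_i$, so one cannot simply compare $\delta_i$ against the degree of a $Y_i$-vertex. The escape is exactly the observation used above, that under POS2 the standing assumption collapses $X_i\cap V_t$ to the empty set, which moves all of $V_t$ into $Y_i$ and lets \eqref{balanced} finish the estimate. Beyond that, I would just verify that the identification of $X_i$ with the ``larger side'' and the ceiling in \eqref{smallworld} behave correctly in the boundary situations $p_i=q_i$ and small $n_i-n_t$, which is routine.
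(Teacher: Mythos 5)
Your proof is correct and takes essentially the paper's own route: first dispose of the case where $X_i\cap V_t$ contains a non-isolated vertex via Observation \ref{noisolatedinx}, and in the remaining case (all of $X_i\cap V_t$ isolated) bound $\delta_i$ by $\max(|X_i\setminus V_t|,|Y_i\setminus V_t|)$ and invoke \eqref{smallworld}. If anything, your POS2 sub-case --- noting that the standing assumption forces $X_i\cap V_t=\emptyset$, hence $V_t\subseteq Y_i$ and $\delta_i\le |Y_i|\le |X_i|=|X_i\setminus V_t|$ --- makes explicit a step the paper's terse proof leaves implicit, since its bound on the degrees of $Y_i$-vertices does not by itself bound the degree of a vertex chosen by minimality within $X_i$.
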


  \begin{proof} By Observation \ref{noisolatedinx} and \eqref{smallworld} the only case in which the observation may fail is that in which all vertices in $X_i \cap V_t$ are isolated. But  then
all vertices in $Y_i$ have degree
 at most $|X_i \setminus V_t|$, and the observation  follows again by \eqref{smallworld}.\end{proof}

Note that if three vertices are removed, two of the three vertices removed are in $X_i$ (possibly one of those being $v$, the vertex with degree $\delta_i$), hence the sum of the degrees of the three vertices is at most $|X_i|+|Y_i|+\delta_i=n_i+\delta_i$. In the case where two vertices $v,y$ are removed (this happens only in POS2),
$deg_{G_i}(y) \le |X_i|$, and
 the sum of the degrees of the two vertices is at most $|X_i|+\delta_i$. Hence, denoting $|E(G_i)|$ by $e_i$, we have:

\begin{equation}\label{howmanyremoved}
e_i-e_{i+1} \le
\begin{cases}
n_i+\delta_i-2 & \mbox{if three vertices are removed} \\
|X_i|+\delta_i -1& \mbox{if two vertices are removed}
\end{cases}
\end{equation}

The ``$-2$'' term in Case 1 compensates for the two edges $e$ and $f$ being counted twice. The ``$-1$'' term in Case 2 compensates for the edge $e$ being counted twice. 



Now, \eqref{n0-nt:ineq} gives

\begin{equation}\label{bound:k_t}
    k_t = \frac{n_t}{2} > \frac{3}{2} k_0 + \frac{3}{4} \; .
\end{equation}

In particular, $k_t > k_0$.
Let $j$ be minimal
such that $k_j \leq k_0$ and $k_{j+1} > k_0$. The first of these facts means that
$\frac{n_j}{2}-\frac{2e_j}{n_j} \leq k_0$. Shuffling terms we get:

\begin{equation}\label{almostthere}
n_j(n_j- 2k_0) \leq 4e_j
\end{equation}

By Observation \ref{deltasmall} and (\ref{bound:k_t}),

\begin{equation}\label{bound:delta}
    \delta_j \leq n_j/2 - k_t + 1/2 < \frac{n_j}{2} - \frac{3}{2}k_0 - \frac{1}{4}
\end{equation}

We first consider the case in which three vertices are removed
in the passage from $G_j$ to $G_{j+1}$. Combining (\ref{bound:delta}) with \eqref{howmanyremoved} gives

$$ e_{j+1} \geq e_j -  n_j - \delta_j +2  \geq  e_j -  \frac{3}{2}n_j + \frac{3}{2}k_0 + 2 + \frac{1}{4}\; $$

 which upon multiplying by $4$ becomes:

$$ 4e_{j+1} \geq  4 e_j -  6n_j + 6k_0 + 9  $$

This, along with \eqref{almostthere}, yields

$$ 4e_{j+1} \geq n_j(n_j- 2k_0) -  6n_j + 6k_0 + 9 = (n_j-3)((n_j-3)- 2k_0)  = n_{j+1}(n_{j+1} - 2k_0).$$

This in turn can be written as:
\begin{equation}\label{final_eq}
    \frac{n_{j+1}}{2} - \frac{2e_{j+1}}{n_{j+1}} \leq k_0
\end{equation}

meaning that $k_{j+1} \leq k_0$, contrary to the assumption.

\medskip

Next consider  the case in which only two vertices are removed in the passage from $G_j$ to $G_{j+1}$.
Remember that this can happen only in POS2.  By
\eqref{ntlarge} and by \eqref{balanced} applied to $i=t$ we know that $|X_j \cap V_t| \ge 1$. By Observation \ref{noisolatedinx} this implies:

\begin{equation}\label{smallydeg}
\delta_j \le |Y_j  \setminus V_t|
\end{equation}

\begin{observation}\label{ejejplus1}
   $ e_j-e_{j+1}\le  n_j-\frac{n_t}{2}-\frac{1}{2}$.
\end{observation}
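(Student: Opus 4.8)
The plan is to unwind what ``only two vertices are removed'' forces, and then to estimate $e_j-e_{j+1}$ by a short chain of inequalities resting entirely on facts already assembled in the preceding paragraphs. As the text observes, removing only two vertices happens only in POS2, so $v$ is a vertex of minimal positive degree \emph{in $X_j$}; since $e=xy$ with $x\in X_j$, $y\in Y_j$ and $v\in\{x,y\}$, necessarily $v=x$, whence $\deg_{G_j}(x)=\delta_j$. In Case II NON explodes a pair $(e,f)$ with $f=xz$, $z\in Y_j$, and the net effect on $G_j$ is exactly the deletion of the two vertices $x$ and $z$ (the orphaned $y$ being returned, as arranged). Because $G_j$ is a simple bipartite graph and $x,z$ lie on opposite sides, the only edge joining them is $f$, so the number of edges lost is
\[
  e_j-e_{j+1}=\deg_{G_j}(x)+\deg_{G_j}(z)-1=\delta_j+\deg_{G_j}(z)-1 .
\]

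Next I would bound the two remaining quantities separately. Since $z\in Y_j$ and $G_j$ is simple, $\deg_{G_j}(z)\le|X_j|$. For $\delta_j$ I would invoke \eqref{smallydeg}, giving $\delta_j\le|Y_j\setminus V_t|$. Writing $|Y_j\setminus V_t|=|Y_j|-|Y_j\cap V_t|$ and using $|X_j|+|Y_j|=n_j$, these combine to
\[
  e_j-e_{j+1}\le |Y_j|-|Y_j\cap V_t|+|X_j|-1=n_j-1-|Y_j\cap V_t| .
\]

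It then remains to show $|Y_j\cap V_t|\ge \frac{n_t-1}{2}$. Here I would use $V_t\subseteq V_j$, so that $Y_j\cap V_t$ equals whichever of $P_t,Q_t$ corresponds to the side presently named $Y_j$; and since the construction maintains $|p_i-q_i|\le1$ for every $i$, in particular for $i=t$, each of $|P_t|,|Q_t|$ is at least $\lfloor n_t/2\rfloor\ge\frac{n_t-1}{2}$. Substituting, $e_j-e_{j+1}\le n_j-1-\frac{n_t-1}{2}=n_j-\frac{n_t}{2}-\frac12$, which is the assertion.

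The one genuinely delicate point is the bound on $\deg_{G_j}(z)$: NON chooses which pair to explode, and nothing stops it from picking an $f$ whose endpoint $z$ has degree as large as $|X_j|$. The reason this is harmless is the built-in compensation: the size of $\deg_{G_j}(z)$ is offset by the smallness of $\delta_j$ coming from \eqref{smallydeg} (which itself relies on $X_j$ having no isolated vertex in POS2 and on $|X_j\cap V_t|\ge1$), together with the fact that the terminal independent set $V_t$, being balanced between the two sides up to one vertex, keeps at least $\frac{n_t-1}{2}$ of its vertices on the $Y$-side. Checking that these three facts line up to cancel the term $|X_j|$ is the crux; everything else is bookkeeping already prepared above.
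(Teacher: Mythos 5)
Your proof is correct, and it reaches the bound by a somewhat different route than the paper. The shared starting point is the same: in the two-vertex case (which occurs only under POS2) the removed vertices are $x=v$ (of degree $\delta_j$) and $z\in Y_j$, so $e_j-e_{j+1}=\delta_j+\deg_{G_j}(z)-1\le \delta_j+|X_j|-1$, which is exactly the second line of \eqref{howmanyremoved}, and both arguments lean on \eqref{smallydeg}. The difference is in how these terms are converted into $n_j$ and $n_t$. The paper bounds $\delta_j$ and $|X_j|$ separately against quantities like $\bigl\lceil (n_j-n_t)/2 \bigr\rceil$ and $(n_j+1)/2$, which forces a split into parity subcases ($n_j\equiv n_t$ or not, and within the latter whether $n_j$ is odd or even), using Observation \ref{deltasmall} in one subcase and \eqref{smallydeg} in the others. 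You instead apply \eqref{smallydeg} uniformly and exploit the cancellation $|X_j|+|Y_j\setminus V_t| = n_j-|Y_j\cap V_t|$, so the only quantity left to control is $|Y_j\cap V_t|$ from below; since $V_t\subseteq V_j$, this set is exactly $P_t$ or $Q_t$, and the balance \eqref{balanced} maintained at $i=t$ gives $|Y_j\cap V_t|\ge\lfloor n_t/2\rfloor\ge (n_t-1)/2$, which absorbs all parity considerations in a single floor. The result is a case-free proof that does not need Observation \ref{deltasmall} at all; the paper's version never needs to identify $Y_j\cap V_t$ with a side of $V_t$, but pays for that in parity bookkeeping. Your concern about $\deg_{G_j}(z)$ possibly being as large as $|X_j|$ is resolved the same way in both arguments (it is precisely the $|X_j|$ term in \eqref{howmanyremoved}), so nothing is lost there.
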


\begin{proof}

 {\bf Subcase I:} $n_j\equiv n_t \pmod{2}$.

 Since $n_j\equiv n_t \pmod{2}$ we have $\delta_j\le \frac{n_j-n_t}{2}$, by Observation~\ref{deltasmall}. Thus, by (\ref{howmanyremoved}) we have $e_j-e_{j+1}\le |X_j|+\delta_j-1\le \frac{n_j+1}{2}+\frac{n_j-n_t}{2}-1= n_j-\frac{n_t}{2}-\frac{1}{2}$, as desired.

{\bf Subcase II:}  $n_j\not\equiv n_t \pmod{2}$.

Suppose first that $n_j$ is odd. Then, $n_t$ is even and we have $|Y_j|<|X_j|$ and $|Y_t|=|X_t|$. Thus $|Y_j\setminus V_t|<|X_j\setminus V_t|$. By \eqref{smallydeg} we have $\delta_j\le |Y_j\setminus V_t|<\frac{n_j-n_t}{2}$. As before, we obtain $e_j-e_{j+1}<  n_j-\frac{n_t}{2}-\frac{1}{2}$.

 Suppose next that $n_j$ is even. Then, $n_t$ is odd and we have $|Y_j|=|X_j|$ while $|Y_t|<|X_t|$. Thus $|Y_j\setminus V_t|>|X_j\setminus V_t|$. By \eqref{smallydeg} we have $\delta_j\le |Y_j\setminus V_t|=\frac{n_j-n_t+1}{2}$. Since $|Y_j|=|X_j|=n_j/2$ we have $e_j-e_{j+1}\le |X_j|+\delta_j -1\le \frac{n_j}{2}+\frac{n_j-n_t+1}{2}-1=  n_j-\frac{n_t}{2}-\frac{1}{2}$.
\end{proof}

Combined this with (\ref{bound:k_t}) and the fact that $k_0 \ge 0$ this yields:

$$ e_{j+1} \geq e_j -  n_j +\frac{n_t}{2} + \frac{1}{2} > e_j -  n_j +\frac{3}{2}k_0 + \frac{5}{4} \;. $$

Multiplying this by 4 yields

$$ 4e_{j+1} \geq  4e_j - 4n_j + 6k_0 + 5\; . $$

Combining this with \eqref{almostthere}  we obtain,
\begin{equation*}
\begin{split}
  4e_{j+1} & \ge n_j(n_j- 2k_0)-4n_j + 6k_0+5 \\
    & = n_j^2-2 k_0 n_j - 4 n_j +6k_0+5 \\
    & > n_j^2-2 k_0 n_j - 4 n_j +4k_0 + 4 \\
    & = (n_j-2)((n_j-2)-2k_0) \\
    & = n_{j+1}(n_{j+1} - 2k_0)\; ,
\end{split}
\end{equation*}

which leads to (\ref{final_eq}) and thus yielding the desired contradiction to \eqref{forcontradiction}.


\end{document}